\documentclass[a4paper, 11pt]{article}\usepackage{amsmath, amsthm, amssymb, array, mathrsfs, enumerate, enumitem, xcolor}
\IfFileExists{ulem.sty}{\usepackage[normalem]{ulem}}{} 

\usepackage{authblk}
\usepackage{indentfirst}
\usepackage{cite}
\usepackage{hyperref}
\usepackage{cleveref}

\DeclareMathOperator*{\esssup}{ess\,sup}

\newtheorem{theorem}{Theorem}[section]
\newtheorem{lemma}{Lemma}[section]

\newtheorem{assumption}{Assumption}[section]
\newtheorem{remark}{Remark}[section]

\crefname{theorem}{Theorem}{Theorems}
\crefname{lemma}{Lemma}{Lemmas}
\crefname{proposition}{Proposition}{Propositions}
\crefname{corollary}{Corollary}{Corollaries}
\crefname{definition}{Definition}{Definitions}
\crefname{example}{Example}{Examples}
\crefname{remark}{Remark}{Remarks}
\crefname{assumption}{Assumption}{Assumptions}

\def\1{\mathbf{1}}

\def\dif{\mathrm{d}}
\def\e{\mathrm{e}}
\def\E{\mathbb{E}}
\def\Ex{\mathbf{E}}

\def\F{\mathcal{F}}
\def\G{\mathcal{G}}
\def\geq{\geqslant}

\def\im{\mathrm{i}}

\def\leq{\leqslant}
\def\N{\mathbb{N}}

\def\P{\mathbb{P}}
\def\Pr{\mathbf{P}}

\def\R{\mathbb{R}}

\def\Var{\operatorname{Var}}

\begin{document}

\title{Left tail asymptotics for the density of supercritical branching processes in random environment}
\author[a]{Panyi Chen\thanks{panyichen@mail.bnu.edu.cn}}
\author[b]{Mei Zhang\thanks{Corresponding author: meizhang@bnu.edu.cn}}
\affil[a,b]{Laboratory of Mathematics and Complex Systems (Ministry of Education), School of Mathematical Sciences, Beijing Normal University, Beijing 100875, P. R. China}
\maketitle
\noindent\textit{Abstract:} Let $(Z_n)_{n\geq 0}$ be a supercritical branching process in a random environment in which every individual produces at least one offspring, and let $W$ be the almost sure limit of the normalized $Z_n$. Write $p_1$ for the (random) probability that an individual has exactly one child and $m_0$ for the mean offspring number in a given environment.   Assume that $p_1$ is positive almost surely and bounded away from $1$, that $m_0$ is bounded away from $1$ and from infinity together with a mild moment condition, and that $\log m_0$ is non-lattice.  Let $a_0>0$ be the unique solution of the equation $\E[p_1 m_0^{a_0}]=1$.   Under these assumptions, we determine the precise left tail asymptotics of the annealed law of $W$: the distribution function $F(x)=\P(W\leq x)$ satisfies $F(x)\sim D x^{a_0}$ as $x\downarrow 0$ with a constant $D\in(0,\infty)$ admitting an explicit representation, and the annealed density $w$ of $W$, which we show exists and can be chosen continuous on $(0,\infty)$, satisfies $w(x)\sim G x^{a_0-1}$ as $x\downarrow 0$ with $G=a_0 D>0$. These results extend the classical power-law left tail behavior of the martingale limit of a Galton--Watson process in the literature to the random environment setting, where the exponent $a_0$ is determined by the averaged environmental law rather than by a fixed offspring distribution.
\bigskip

\noindent\textit{Mathematics Subject Classification (2020)}: 60J80; 60F10.
\bigskip

\noindent\textit{Key words and phrases}: branching process; random environment; martingale limit; left tail asymptotics; renewal theorem.

\section{Introduction}

Consider a supercritical Galton--Watson process $\{Z_n^0:n\ge 0\}$. It is well known that the normalized population size $Z_n^0/m^n$ forms a nonnegative martingale converging almost surely to a limit $W^0$, which is strictly positive under certain conditions (see e.g. \cite{AthreyaNey1972}). The tail behavior of $W^0$ has been studied for a long time and exhibits two different regimes. When the minimum family size is at least two, both tails of $W^0$ decay exponentially fast, with rates governed by the maximum and minimum family sizes; this is the main focus of Biggins and Bingham \cite{BigginsBingham1993}, who combined the asymptotics of the moment generating function of $W^0$ with G\"artner--Ellis type large deviation arguments. When the minimum family size is one, the left tail of $W^0$ decays instead like a power: Dubuc \cite{Dubuc1971} proved that the density of $W^0$ behaves like a power of $x$ near the origin, and Biggins and Bingham \cite{BigginsBingham1993} derived a strengthening of Dubuc's result on the behavior of the density near $0$. In this regime the exponent is determined by the probability $p_1$ of having exactly one child through the equation $p_1 m^{a}=1$.


Branching processes in a random environment (BPRE) were introduced by Smith and Wilkinson \cite{smith1969} and have been extensively studied as a natural generalization of the Galton--Watson process in recent years, see \cite{vat2005,2021afa,Athreya1971a,Athreya1971b}, \cite{2013ban,2014ban,2014boi}, \cite{2022dar,2022bur,Damek2019Absolute}, \cite{2017grama,grama2023,Vat2017}, etc. For the supercritical case, Grama, Liu and Miqueu \cite{2017grama} investigated the harmonic moments and large deviations of the BPRE. Damek, Gantert and Kolesko \cite{Damek2019Absolute} established the absolute continuity of the law of the martingale limit $W$. Later, Grama, Liu and Miqueu \cite{grama2023} showed that, under their boundedness assumptions and when each individual has at least one child, the solution $a_0>0$ of $\E[p_1m_0^{a_0}]=1$ is the critical value for the existence of the harmonic moments $\E W^{-b}$.

In the present paper, we study the precise left tail asymptotics of the annealed law of $W$ for a supercritical BPRE with $p_0=0$. Our main result (\cref{mainthm}) states that, under the standing assumptions stated below, including a boundedness assumption on the mean offspring numbers $m_0$ and a non-lattice assumption on $\log m_0$, the distribution function $F$ and the density $w$ of $W$ satisfy
$$
F(x)=\P(W\leq x)\sim D x^{a_0}, \qquad w(x)\sim G x^{a_0-1} \quad \text{as } x\downarrow 0,
$$
where $D\in(0,\infty)$ has the representation \eqref{Dformula} and $G=a_0 D>0$. This result extends the left tail asymptotics of Dubuc \cite{Dubuc1971} and Biggins and Bingham \cite{BigginsBingham1993} from the Galton--Watson process to the BPRE. In contrast with the constant environment case, the exponent $a_0$ is now determined by the averaged equation $\E[p_1 m_0^{a_0}]=1$ involving the whole environmental law.

We note that Kutsenko \cite{Kutsenko2025} derived a complete left-tail expansion with multiplicatively periodic coefficients for BPREs with a common offspring mean across environments, under additional analytic assumptions. Here, the mean varies with the environment and $\log m_0$ is non-lattice; we obtain a leading-order density asymptotic with a constant prefactor by renewal methods.

Our approach is based on the decomposition $W\overset{d}{=}m_0^{-1}\sum_{j=1}^{Z_1}W^{(j)}$, where, conditionally on the environment, the $W^{(j)}$ are independent with the martingale-limit law corresponding to the shifted environment. Starting from it, we get renewal type equations for $F$ and $w$ driven by the associated random walk $S_n=\log\Pi_n$. Meanwhile, using the identity $\E[p_1 m_0^{a_0}]=1$, we establish a Cram\'er-type change of measure on the environment sequence, under which $(S_n)$ becomes a random walk with positive drift $\tilde{\mu}$.  This measure change  together with the key renewal theorem leads to the asymptotics of $F$;  see Lemma \ref{lemma-renewal} and the proof of Theorem \ref{mainthm}. Passing from the distribution function to the density is more delicate, since the asymptotics of $F$ cannot simply be differentiated. The main task is to control the convolution powers $w_\xi^{*j}$ of the quenched density, uniformly in $j$, before averaging over the common environment.   First, in Lemmas \ref{l2.2}--\ref{l2.3} we obtain some estimates of the quenched characteristic functions $\Phi_\xi$. Based on them,  we then get in Lemma \ref{quenchedregularity} the quenched estimates $w_\xi^{*j}(x)\le CB(\xi)x^{b-1}F_\xi(x)$ uniformly in $j\ge2$, which is crucial to the proof of Theorem \ref{mainthm} for the density.

The paper is organized as follows. In the remainder of this section we describe the model, fix the notation and state the standing assumptions and the main result. In \cref{sec:lemmas} we establish the existence and regularity properties of the density of $W$. \cref{sec:main} is devoted to the proof of the main result; an explicit example with linear fractional offspring distributions is given in \cref{rem:LF}.

\medskip

We start by describing a BPRE. The random environment is represented by a sequence $\xi=(\xi_0,\xi_1,\ldots)$ of independent and identically distributed random variables taking values in an abstract space $\Xi$. The random variable $\xi_n$ represents the random environment at time $n$; to each realization of $\xi_n$ corresponds a probability law $\{p_i(\xi_n): i\in \N\}$ on $\N=\{0,1,2,\ldots\}$, whose probability generating function is
$$
f_{\xi_n}(s)=f_n(s)=\sum_{i=0}^\infty p_i(\xi_n)s^i, \quad s\in [0,1], \quad p_i(\xi_n)\geq 0, \quad \sum_{i=0}^\infty p_i(\xi_n)=1.
$$
Define the process $(Z_n)_{n\geq 0}$ by the relations
\begin{align}\label{zn+1}
Z_0=1, \quad Z_{n+1}=\sum_{i=1}^{Z_n}N_{n,i}, \quad n\geq 0,
\end{align}
where $N_{n,i}$ is the number of children of the $i$-th individual of the generation $n$. Conditionally on the environment $\xi$, the r.v.'s $N_{n,i}$ $(n\geq0,\ i=1,2,\ldots)$ are mutually
independent, and for each $n$, the variables $N_{n,i}$ have a common probability generating function $f_n$.

We denote by $\Pr_\xi$ the \textit{quenched law}, i.e. the conditional probability when the environment $\xi$ is given, and by $\nu$ the law of the environment $\xi$. Then $\P(\dif x,\dif \xi)=\Pr_\xi(\dif x)\nu(\dif \xi)$ is the total law, called the \textit{annealed law}. The corresponding quenched and annealed expectations are denoted by $\Ex_\xi$ and $\E$, respectively.

Put
$$
\begin{gathered}
m_n(a)=m_n(\xi,a)=\sum_{i=0}^\infty i^a p_i(\xi_n), \quad m_n=m_n(1),\quad \Pi_n=\Ex_\xi Z_n,
\end{gathered}
$$
where $m_{n}$ represents the average number of children of an individual of generation $n$ when the environment $\xi$ is given. Let $W_n=\dfrac{Z_n}{\Pi_n}$ be the normalized population size. Then $(W_n)_{n\geq 0}$ forms a nonnegative martingale under both the quenched law $\Pr_\xi$ and the annealed law $\P$, with respect to the natural filtration
$$
\F_n=\sigma(\xi, N_{k,i},0\leq k\leq n-1, i=1,2,\ldots),
$$
where by convention $\F_0=\sigma(\xi)$. The limit $W=\lim W_n$ therefore exists $\P$-a.s.

We consider the associated random walk
\begin{equation}\label{Sn0}
S_n=\log \Pi_n=\sum_{i=1}^n \log m_{i-1},
\end{equation}
where $\log m_{i-1}$ are i.i.d. depending only on the environment $\xi$.

For the sake of brevity, we set $\mu:=\E\log m_0$ and $\sigma^2:=\Var(\log m_0)$.
We shall assume that the BPRE is supercritical with $\mu\in(0,\infty)$, and $(S_n)$ is non-degenerate with $0<\sigma^2<\infty$.

Throughout the paper, we assume the following condition:
$$
\E\left[\dfrac{Z_1\log^+ Z_1}{m_0}\right]<\infty,
$$
which implies that $W_n\to W$ in $L^1$, and
$$
\P(W>0)=\P(Z_n\to\infty)=\lim_{n\to\infty}\P(Z_n>0)>0.
$$
See e.g. \cite[Theorem 2]{TANNY1988123}.

Furthermore, we assume in the sequel that each individual has at least one child, which means that
$$
p_0=0, \quad  \P\text{-a.s.}
$$
We also assume that
$$
p_1>0, \quad \P\text{-a.s.}, \qquad\text{and}\qquad
\bar{p}:=\esssup p_1<1.
$$
This implies that the associated random walk has positive increments, $Z_n\to \infty$ and $W>0$ $\P$-a.s.

Then the behavior of $(Z_n)$ is mainly determined by $S_n$ which is seen from the decomposition
$$
\log Z_n=S_n+\log W_n.
$$

As in \cite{grama2023}, we assume

\begin{assumption}\label{boundedm}
There exist constants $\underline{m}$, $\bar{m}$, $\delta>1$ such that
  $$
1<\underline{m}\leq m_0\leq \bar{m}, \quad m_0(\delta)\leq \bar{m}^\delta. \quad \P\text{-a.s}.
$$
\end{assumption}

Under \cref{boundedm}, by \cite[Corollary 2.3]{grama2023}, the equation $\E[p_1 m_0^{a}]=1$ has a unique solution $a=a_0>0$, which is the critical value for the existence of the harmonic moments of $W$.

We also assume
\begin{assumption}\label{nonlattice}
The random variable $\log m_0$ is non-lattice.
\end{assumption}

The following is the main result of this paper:

\begin{theorem}\label{mainthm}
Suppose \cref{boundedm,nonlattice} hold. Let $a_0$ be the unique solution of $\E[p_1 m_0^{a_0}]=1$. Define $F(x):=\P(W\leq x)$ and $\mu_W(\cdot)=\P(W\in \cdot)$. Then
$\mu_W$ is absolutely continuous w.r.t. Lebesgue measure and the density $w$ can be chosen to be continuous on $(0,\infty)$. Furthermore, there exist constants $D>0$ and $G=a_0 D$ such that
\begin{equation}\label{main00}
w(x)\sim G x^{a_0-1}, \qquad \mbox{and}\qquad  F(x)\sim D x^{a_0}, \qquad x\downarrow 0.
\end{equation}
\end{theorem}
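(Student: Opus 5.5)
The plan is to work from the branching decomposition
$$
W\overset{d}{=}\frac{1}{m_0}\sum_{j=1}^{Z_1}W^{(j)},
$$
where, given $\xi$, the $W^{(j)}$ are i.i.d.\ with the law of $W$ under the shifted environment $T\xi=(\xi_1,\xi_2,\ldots)$ and are independent of $Z_1$. Write $F_\xi(x)=\Pr_\xi(W\le x)$ and split according to $Z_1=1$ or $Z_1\ge2$:
$$
F_\xi(x)=p_1(\xi_0)F_{T\xi}(m_0x)+\sum_{k\ge2}p_k(\xi_0)F_{T\xi}^{*k}(m_0x).
$$
Averaging over $\xi_0$, which is independent of $T\xi$, gives
$$
F(x)=\E\bigl[p_1F(m_0x)\bigr]+R(x),\qquad R(x)=\E\Bigl[\sum_{k\ge2}p_k(\xi_0)F_{T\xi}^{*k}(m_0x)\Bigr].
$$
Next I substitute $x=e^{-t}$ and $\phi(t)=e^{a_0t}F(e^{-t})$, and define the probability measure $\tilde\P(\log m_0\in\dif u)=\E[p_1m_0^{a_0};\log m_0\in\dif u]$, which is a probability because $\E[p_1 m_0^{a_0}]=1$. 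This turns the equation into the renewal equation $\phi=\phi*\tilde\P+r$, with $r(t)=e^{a_0t}R(e^{-t})$. Under $\tilde\P$ the increments are positive, bounded (\cref{boundedm}) and non-lattice (\cref{nonlattice}), with mean $\tilde\mu\in(0,\infty)$. Iterating gives $\phi(t)\to0$ as $t\to-\infty$, so $\phi=U*r$ with $U$ the renewal measure. The key renewal theorem then yields $\phi(t)\to\tilde\mu^{-1}\int_{\R}r(s)\,\dif s=:D$, provided $r$ is directly Riemann integrable. The formula for $D$ gives $D>0$ automatically, since $r>0$ because $F_\xi>0$ near $0$ (by \cite{Damek2019Absolute}).

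To check direct Riemann integrability of $r$, I bound $F_{T\xi}^{*k}(y)\le F_{T\xi}(y)^2$ for $k\ge2$, since the convolution of at least two nonnegative variables is at most $y$ only if two of them are. This gives $R(x)\le\E[F_{T\xi}(m_0x)^2]$. The harmonic moment result of \cite{grama2023} gives $\E W^{-b}<\infty$ for $b<a_0$, and I expect the analogous quenched Markov bound to give $F_\xi(y)\le C(\xi)y^{b}$ for any $b<a_0$, with suitable integrability of $C(\xi)$. Choosing $b\in(a_0/2,a_0)$, I then get $r(t)=O(e^{-(2b-a_0)t})$ as $t\to\infty$. For $t\to-\infty$, the bound $F\le1$ gives $r(t)\le e^{a_0t}$. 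Together with the monotonicity structure (an $e^{a_0t}$ times a monotone function), these bounds suffice for direct Riemann integrability, and this proves $F(x)\sim Dx^{a_0}$.

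For the density, absolute continuity and continuity of a version of $w$ on $(0,\infty)$ come from Lemmas \ref{l2.2}--\ref{l2.3}, whose quenched characteristic function bounds make $\Phi_\xi$ integrable after enough convolution; this gives a continuous quenched density $w_\xi$, hence $w=\E w_\xi$. Differentiating the decomposition gives
$$
w(x)=\E\bigl[p_1m_0w_{T\xi}(m_0x)\bigr]+\E\Bigl[\sum_{k\ge2}p_k m_0w_{T\xi}^{*k}(m_0x)\Bigr].
$$
With $\psi(t)=e^{(a_0-1)t}w(e^{-t})$ the same tilted renewal equation appears, with remainder
$$
q(t)=e^{(a_0-1)t}\E\Bigl[\sum_{k\ge2}p_km_0w_{T\xi}^{*k}(m_0e^{-t})\Bigr].
$$
Lemma \ref{quenchedregularity} bounds $w_\xi^{*j}(x)\le CB(\xi)x^{b-1}F_\xi(x)$ uniformly in $j\ge2$. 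Combined with the quenched bound on $F_\xi$, this gives $q(t)=O(e^{-(2b-a_0)t})$ with the moments of $B$ controlled. Continuity of $w$ together with these bounds makes $q$ directly Riemann integrable, and the key renewal theorem gives $\psi(t)\to G$. The same argument also gives the formula $G=\tilde\mu^{-1}\int q$.

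It remains to identify $G=a_0D$. Since $F(x)=\int_0^xw$ and $w(x)\sim Gx^{a_0-1}$, integrating gives $F(x)\sim(G/a_0)x^{a_0}$, so $G=a_0D>0$. The main obstacle is the uniform-in-$j$ control of the convolution powers, so that the sum over $k$ weighted by $p_k$ converges after averaging, and matching the integrability of $B(\xi)$ against the factor $m_0$ and the tilted measure. Here \cref{boundedm} (bounded $m_0$ and the $\delta$-moment) is what I would lean on.
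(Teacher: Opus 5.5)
Your architecture matches the paper's: the same decomposition, the same tilt by $p_1m_0^{a_0}$, the key renewal theorem applied to $\e^{a_0t}F(\e^{-t})$ and to $\e^{(a_0-1)t}w(\e^{-t})$, and the uniform convolution bound of \cref{quenchedregularity}. Identifying $G=a_0D$ by integrating $w\sim Gx^{a_0-1}$ is a valid shortcut for the paper's Tonelli computation. The gap is exactly where the renewal argument needs input, namely showing that the remainder is $O(x^{a})$ for some $a>a_0$. Your reduction $R(x)\le\E[F_{T\xi}(m_0x)^2]$ is fine. But with $C(\xi)=\Ex_\xi W^{-b}$ it requires $\E[C(\xi)^2]<\infty$, whereas \cite{grama2023} only gives $\E C(\xi)=\E W^{-b}<\infty$. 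Moreover, the claim is false for $b$ near $a_0$. Since $\Pr_\xi(Z_n=1)=\prod_{i<n}p_1(\xi_i)$, on this event $W=\Pi_n^{-1}W^{(n)}$ with $W^{(n)}$ of law $\Pr_{T^n\xi}(W\in\cdot)$, and $\Ex_{T^n\xi}W^{-b}\geq1$ by Jensen. Hence $\Ex_\xi W^{-b}\geq\prod_{i<n}p_1(\xi_i)m_i^b$, so $\E[(\Ex_\xi W^{-b})^2]\geq(\E[p_1^2m_0^{2b}])^n$ for every $n$. Yet $\E[p_1^2m_0^{2a_0}]>(\E[p_1m_0^{a_0}])^2=1$ whenever $p_1m_0^{a_0}$ is non-degenerate. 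The missing idea is $\E[p_1^2m_0^{a_0}]\leq\bar p\,\E[p_1m_0^{a_0}]=\bar p<1$. This yields $a_2'>a_0$ with $\E[p_1^2m_0^{a_2'}]<1$, hence the two-individual harmonic moment $\E_2W^{-a_2'}<\infty$ by \cite[Theorem 2.1(b)]{grama2023}. Since $\E[F_\xi(y)^2]\leq F_2(2y)$ (or directly $F_\xi^{*k}\leq F_\xi^{*2}$), this gives $R(x)\leq Cx^{a_2'}$ with no quenched second moment.

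The density step has the same defect in another form. The $b$ in \cref{quenchedregularity} is a small fixed constant (constrained by $2b<\log(1/\rho)/\log\bar m$ and $\bar p\bar m^b<1$), not your $b\in(a_0/2,a_0)$, so $q(t)=O(\e^{-(2b-a_0)t})$ does not follow. Combining the lemma with $F_\xi(x)\leq C(\xi)x^{b'}$ gives $x^{b+b'-1}$, which beats $x^{a_0-1}$ only if $b'>a_0-b$. You would then need $\E[B(\xi)\,\Ex_\xi W^{-b'}]<\infty$ with $b'$ close to $a_0$, again a quenched harmonic moment beyond first order that you do not have. The paper avoids this with H\"older and $0\leq F_\xi\leq1$: $\E[B(\xi)F_\xi(x)]\leq\|B\|_r(\E F_\xi(x))^{1-1/r}=\|B\|_rF(x)^{1-1/r}\leq Cx^{a_0(1-1/r)}$, using the already proved asymptotics of $F$. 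Taking $r>a_0/b$ then gives $w_j(x)\leq Cx^{a_*-1}$ with $a_*=b+a_0(1-1/r)>a_0$, uniformly in $j\geq2$. Two further ingredients are still missing:
\begin{itemize}
\item $R(x)\leq C/x$ for large $x$ (from $xw_\xi^{*j}(x)\leq jA(\xi)$), for the $t\to-\infty$ half of direct Riemann integrability of $q$;
\item $w(x)\leq C/x$, to make the iteration remainder vanish.
\end{itemize}
Finally, existence and continuity of $w$ do not come from ``$\Phi_\xi$ integrable after enough convolution''. \cref{l2.3} only gives decay $|t|^{-\beta}$ with $\beta$ possibly small. That produces densities for the $k$-fold quenched convolutions with $k\beta>1$, but not for $W$ itself, and no integrable local bound to pass continuity through $\E$. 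The paper instead proves integrability of $\Phi'_\xi$ (\cref{quenchedregularity}(i)--(ii) and \cref{densityproperties}), giving $xw_\xi(x)\leq A(\xi)$ with $\E A(\xi)<\infty$.
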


We close this section by fixing some notation. We denote by $\Phi(t):=\E[\e^{\im tW}]$ and $\Phi_\xi(t):=\Ex_\xi[\e^{\im tW}]$ the annealed and quenched characteristic functions, and put
$$
F_\xi(x):=\Pr_\xi(W\leq x),\qquad
F_j(x):=\E[F_\xi^{*j}(x)].
$$
Here $F_j$ is the distribution function of the martingale limit for a process $Z_n$ started with $j$ individuals in one common environment, normalized by $\Pi_n$. We write $\P_j$ and $\E_j$ for this annealed law and its expectation, and $w$ and $w_j$ for the densities of $F$ and $F_j$, respectively. We use $T\xi=(\xi_1,\xi_2,\ldots)$ for the environment shift.

$W$ admits the decomposition
\begin{equation}\label{decomposition}
W\overset{d}{=}\dfrac{1}{m_0}\sum_{j=1}^N  W^{(j)},
\end{equation}
where $N=Z_1=N_{0,1}$ and, conditionally on $\xi$, the variables $W^{(j)}$ are independent of each other and of $N$, with common law $\Pr_{T\xi}(W\in\cdot)$. Under $\P$, the vector $(W^{(j)})_{j\geq1}$ is independent of $(N,\xi_0)$, but its coordinates share the future environment and are generally dependent. In this decomposition and its applications, $W^{(j)}$ is the martingale limit of a subtree.

Throughout the paper, $C$ denotes a positive constant whose value may differ from line to line, and $\1_A$ denotes the indicator function of a set $A$. For positive functions $f$ and $g$, we write $f(x)\sim g(x)$ as $x\downarrow0$ if $f(x)/g(x)\to1$ as $x\downarrow0$. For random variables $(Y_n)_{n\geq0}$ and $Y$, we write $Y_n\Rightarrow Y$ for convergence in distribution. Finally, $K\Subset(0,\infty)$ means that $K$ is compactly contained in $(0,\infty)$, i.e.\ $K$ is compact and $K\subset(0,\infty)$.


\section{Auxiliary Lemmas}\label{sec:lemmas}

In this section, we establish the existence and regularity properties of the density of $W$, which is key to the proof of the main result.

  All quenched assertions below are understood on a common set of full $\nu$-measure, chosen to be stable under $T$. Suprema over environments are taken over this set.

  \begin{lemma}\label{l2.1}
  Fix $1<p\leq2$.   There exists $C_p>0$, depending only on $p$, such that for every $k\geq1$ and i.i.d. nonnegative random variables $X_1,\ldots,X_k$ with mean $m$ and $p$-th
moment $m(p)<\infty$, it holds that
\begin{equation}\label{vBEmean}
\E\Big(\frac1k\sum_{j=1}^kX_j\Big)^p\leq m^p+C_pk^{1-p}m(p).
\end{equation}
\end{lemma}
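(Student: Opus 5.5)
We center and apply a von Bahr--Esseen type inequality. Write $\bar X=\frac1k\sum_{j=1}^k X_j = m + \bar Y$, where $\bar Y=\frac1k\sum_{j=1}^k Y_j$ and $Y_j=X_j-m$ are i.i.d.\ and centered. The target bound then splits into a ``mean part'' $m^p$ and a ``fluctuation part'' controlled by $\E|\bar Y|^p$.

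\textbf{Step 1 (elementary inequality).} For $1<p\le2$ there is a constant $c_p$ such that, for all real $a>0$ and $y\ge -a$,
\[
|a+y|^p\le a^p+p\,a^{p-1}y+c_p|y|^p .
\]
To prove it, divide by $a^p$ and set $t=y/a$, reducing the claim to $|1+t|^p\le 1+pt+c_p|t|^p$ for $t\ge-1$. Near $t=0$ the left side minus $1+pt$ is of order $t^2\le |t|^p$ for $|t|\le1$, since $p\le 2$. For large $|t|$ both sides grow like $|t|^p$. A compactness argument in between gives a finite $c_p$. (The case $m=0$ is trivial, since then the $X_j$ vanish almost surely.)

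\textbf{Step 2 (taking expectations).} Apply Step 1 with $a=m$ and $y=\bar Y$; note $\bar Y\ge -m$ because the $X_j$ are nonnegative. Taking expectations, the linear term vanishes because $\E\bar Y=0$, so
\[
\E\bar X^p\le m^p+c_p\,\E|\bar Y|^p .
\]

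\textbf{Step 3 (von Bahr--Esseen).} For independent centered variables and $1\le p\le 2$, the von Bahr--Esseen inequality gives $\E\bigl|\sum_{j=1}^k Y_j\bigr|^p\le 2\sum_{j=1}^k\E|Y_j|^p$. Hence
\[
\E|\bar Y|^p\le 2k^{1-p}\,\E|X_1-m|^p .
\]

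\textbf{Step 4 (bounding the centered moment).} Since both $X_1$ and $m$ are nonnegative, $|X_1-m|\le \max(X_1,m)$. Therefore
\[
\E|X_1-m|^p\le \E X_1^p+m^p\le 2m(p),
\]
where the last step uses Jensen's inequality, $m^p\le m(p)$.

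Combining Steps 2--4 gives the claim with $C_p=4c_p$. The only mildly delicate point is the pointwise inequality in Step 1. If one prefers to avoid it, an alternative is to use $\E|\bar X|^p\le (\E \bar X^2)^{p/2}$ for $p=2$ and interpolate, but Step 1 is cleaner and works uniformly for all $1<p\le2$.
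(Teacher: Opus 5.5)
Your proof is correct and follows essentially the same route as the paper: you center, use the pointwise inequality $|a+y|^p\le a^p+pa^{p-1}y+c_p|y|^p$ (which the paper uses implicitly when it says the linear term vanishes), and then apply the von Bahr--Esseen bound $\E|\sum_j Y_j|^p\le 2\sum_j\E|Y_j|^p$ and control $\E|X_1-m|^p$ by a multiple of $m(p)$; your bound $2m(p)$ is slightly sharper than the paper's $2^pm(p)$. The closing aside about interpolating from $p=2$ is unnecessary and would fail anyway, since it needs a finite second moment that the lemma does not assume.
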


\begin{proof}
Writing $S_k=\sum_{j=1}^k(X_j-m)$, the linear term
$pm^{p-1}\E(k^{-1}S_k)$ vanishes, and Theorem~2 of
\cite{vonBahrEsseen1965} yields
\begin{align*}
\E\Big(\frac1k\sum_{j=1}^kX_j\Big)^p
&=\E\big(m+k^{-1}S_k\big)^p
\leq m^p+C_pk^{-p}\E|S_k|^p
\leq m^p+2C_pk^{1-p}\E|X_1-m|^p\\
&
\leq m^p+2^{p+1}C_pk^{1-p}m(p),
\end{align*}
where $\E|X_1-m|^p\leq2^{p-1}(m(p)+m^p)\leq2^pm(p)$.

\end{proof}


\begin{lemma}\label{pmoment}
Fix $p\in(1,\min\{\delta,2\}]$. Then we have
\begin{equation}\label{uniformpositive}
\Ex_\xi W_n=\Ex_\xi W=1, \quad  \sup_\xi\Ex_\xi W^p\leq K<\infty.
\end{equation}
\end{lemma}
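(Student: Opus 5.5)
We establish a uniform recursive bound on the $p$-th moments of $W_n$ via \cref{l2.1}, and then pass to the limit. The mean statement comes first. Conditionally on $\xi$, $(W_n)$ is a nonnegative martingale with $\Ex_\xi W_n=\Ex_\xi W_0=1$. The identity $\Ex_\xi W=1$ will follow from uniform integrability, which is a consequence of the uniform $L^p$ bound below. This is cleaner than invoking the $L\log L$ condition quenched, which would only hold $\nu$-a.s.

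\textbf{Recursion.} Given $\xi$ and $\F_n$, we write
\[
W_{n+1}=\frac{Z_n}{\Pi_n}\cdot\frac{1}{Z_n}\sum_{i=1}^{Z_n}\frac{N_{n,i}}{m_n}.
\]
Here the $N_{n,i}/m_n$ are i.i.d. with mean $1$ and $p$-th moment $m_n(p)/m_n^p$. Conditioning on $Z_n=k$ and applying \cref{l2.1} gives
\[
\Ex_\xi[W_{n+1}^p\mid\F_n]\le W_n^p\Big(1+C_p Z_n^{1-p}\frac{m_n(p)}{m_n^p}\Big)
= W_n^p+C_p\frac{m_n(p)}{m_n^p}\,\Pi_n^{-p}Z_n .
\]
Taking quenched expectations and using $\Ex_\xi Z_n=\Pi_n$, we obtain
\[
\Ex_\xi W_{n+1}^p\le \Ex_\xi W_n^p + C_p\frac{m_n(p)}{m_n^p}\Pi_n^{1-p}.
\]

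\textbf{Uniform bounds on the coefficients.} By Jensen/Lyapunov, $m_n(p)\le m_n(\delta)^{p/\delta}\le \bar m^{p}$, since $p\le\delta$ and \cref{boundedm} gives $m_n(\delta)\le\bar m^\delta$. Together with $m_n\ge\underline m$, this yields
\[
\frac{m_n(p)}{m_n^p}\le(\bar m/\underline m)^p .
\]
Also $\Pi_n\ge\underline m^{\,n}$, so $\Pi_n^{1-p}\le \underline m^{-(p-1)n}$.

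\textbf{Summing.} Starting from $W_0=1$ and summing the recursion,
\[
\Ex_\xi W_n^p\le 1+C_p(\bar m/\underline m)^p\sum_{k\ge0}\underline m^{-(p-1)k}=:K<\infty,
\]
uniformly in $n$ and in $\xi$ (on the full-measure set). The geometric series converges because $\underline m>1$ and $p>1$.

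\textbf{Passing to the limit.} The uniform $L^p$ bound gives quenched uniform integrability, so $W_n\to W$ in $L^1(\Pr_\xi)$ and $\Ex_\xi W=1$. Fatou's lemma then gives $\Ex_\xi W^p\le\liminf_n\Ex_\xi W_n^p\le K$.

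The only delicate step is the recursion. We must apply \cref{l2.1} conditionally on $Z_n=k$, using that the offspring variables are independent of $\F_n$ given $\xi$. We must also check that the remainder $Z_n^{1-p}W_n^p$ simplifies to $\Pi_n^{-p}Z_n$, so that its expectation becomes geometrically small. With \cref{boundedm}, the rest is routine.
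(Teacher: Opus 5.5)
Your proposal is correct and follows essentially the same route as the paper: the decomposition $W_{n+1}=W_n\cdot Z_n^{-1}\sum_{i\le Z_n}N_{n,i}/m_n$, then Lemma~\ref{l2.1} applied conditionally on $Z_n$, then the identity $Z_n^{1-p}W_n^p=\Pi_n^{-p}Z_n$ to obtain the increment $C_p\Pi_n^{1-p}m_n(p)/m_n^p$, then a geometric sum, and finally uniform integrability plus Fatou. Your Lyapunov step $m_n(p)\le m_n(\delta)^{p/\delta}\le\bar m^{p}$ makes explicit the bound $m_n(p)/m_n^p\le(\bar m/\underline m)^p$, which the paper uses without comment.
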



\begin{proof}

By \eqref{zn+1}, $$W_{n+1}=\frac{W_n}{m_n}\cdot Z_n^{-1}\sum_{j=1}^{Z_n}N_{n,j}, $$
where  under $\Pr_\xi$, conditionally on $Z_n$, the variables $\{N_{n,j}:1\leq j\leq Z_n\}$
are i.i.d. with mean
$m_n$ and $p$-th moment $m_n(p)$. By this decomposition we can obtain $\Ex_\xi W_n=1$. Applying \eqref{vBEmean} and using
$Z_n^{1-p}W_n^p=\Pi_n^{1-p}W_n$, we obtain
$$
\Ex_\xi W_{n+1}^p
\leq\Ex_\xi W_n^p+C_p\,\Pi_n^{1-p}\frac{m_n(p)}{m_n^p},
$$
which yields that
$$
\sup_n\Ex_\xi W_n^p
\leq1+C_p\sum_{n\geq0}\Pi_n^{1-p}\frac{m_n(p)}{m_n^p}
\leq1+C_p\left(\frac{\bar m}{\underline m}\right)^p
\sum_{n\geq0}\underline m^{-n(p-1)}=:K<\infty.
$$
Uniform integrability and Fatou's lemma yield \eqref{uniformpositive}.
\end{proof}

For the quenched $L^p$ bound \eqref{uniformpositive}, a similar argument appears in the proof of \cite[Theorem~1.1]{HuangLiu2014}. Other related moment results can be found in \cite{Liu2001,HuangLiu2012,LiangLiu2013}.


\begin{lemma}\label{l2.4}
  Let $v$ be a probability density on $[0,\infty)$ with distribution
function $V$ such that
$$
v(x)\leq A_0x^{d-1},\qquad 0<x\leq1
$$
for some  $A_0,d\geq0$. Then, for each fixed $k\geq2$,  there exists $C(k,d)>0$, depending only on $k$ and $d$, such that
\begin{equation}\label{convolutionestimatek}
v^{*k}(x)\leq C(k,d) A_0x^{d-1}V(x)^{k-1},\qquad 0<x\leq1.
\end{equation}
  If in addition,
$$
V(x)\leq K_0x^\alpha,\qquad 0<x\leq1
$$
  for some $K_0,\alpha>0$, then there exists a constant $C(K_0,\alpha,d)$, depending only on $K_0,\alpha,d$, such that
\begin{equation}\label{convolutionestimate}
v^{*j}(x)\leq C(K_0,\alpha,d) A_0x^{d-1}V(x),\qquad j\geq2,\quad 0<x\leq1.
\end{equation}
\end{lemma}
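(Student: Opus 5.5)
The plan is to prove \eqref{convolutionestimatek} by induction on $k$, splitting each convolution integral at $x/2$. For \eqref{convolutionestimate} I will use a separate "largest summand" decomposition, combined with an exponential Chebyshev bound that produces geometric decay in $j$. The induction alone is not enough for the second part, because its constants $C(k,d)$ grow with $k$.

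\textbf{Step 1: the fixed-$k$ bound.} For $k=1$ the claim is the hypothesis, with $C(1,d)=1$ and $V^0=1$. For $k\ge2$ write
$$
v^{*k}(x)=\int_0^{x/2}v(y)\,v^{*(k-1)}(x-y)\,\dif y+\int_0^{x/2}v^{*(k-1)}(y)\,v(x-y)\,\dif y .
$$
In both integrals the argument $x-y$ lies in $[x/2,x]$. Hence $(x-y)^{d-1}\le c_d x^{d-1}$ with $c_d=\max\{1,2^{1-d}\}$, which covers the possibly negative exponent $d-1$.

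In the first integral, apply the induction hypothesis to $v^{*(k-1)}(x-y)$. This gives a bound $C(k-1,d)c_dA_0x^{d-1}V(x)^{k-2}$, and the remaining factor $\int_0^{x/2}v=V(x/2)\le V(x)$ supplies the missing power.

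In the second integral, bound $v(x-y)\le c_dA_0x^{d-1}$. What remains is $\int_0^{x/2}v^{*(k-1)}=\P(X_1+\dots+X_{k-1}\le x/2)$, where the $X_i$ are i.i.d. with density $v$. Since every summand must then be $\le x$, this probability is at most $V(x)^{k-1}$.

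Adding the two pieces gives \eqref{convolutionestimatek} with $C(k,d)=c_d\,(C(k-1,d)+1)$.

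\textbf{Step 2: the uniform bound, via the largest summand.} Let $X_1,\dots,X_j$ be i.i.d. with density $v$ and $S_j=\sum_i X_i$. Condition on the index $i$ of the largest summand, and write $S^{(i)}$ for the sum of the other $j-1$ variables. Then
$$
v^{*j}(x)\le\sum_{i=1}^j\E\big[v(x-S^{(i)})\,\1\{x-S^{(i)}\ge x/j,\ S^{(i)}\le x\}\big].
$$
The point is that $x-S^{(i)}=X_i$ is the largest summand, so it is at least $x/j$. On this event $v(x-S^{(i)})\le A_0x^{d-1}j^{(1-d)^+}$. Therefore
$$
v^{*j}(x)\le j^{1+(1-d)^+}A_0x^{d-1}\,\P(S_{j-1}\le x).
$$

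\textbf{Step 3: geometric decay of $\P(S_{j-1}\le x)$.} Split off one variable:
$$
\P(S_{j-1}\le x)\le \P(X_1\le x)\,\P(S_{j-2}\le x)=V(x)\,\P(S_{j-2}\le 1),
$$
using $x\le1$. For the second factor use exponential Chebyshev: $\P(S_{j-2}\le1)\le \e^{\lambda}\rho^{j-2}$ with $\rho=\E\e^{-\lambda X_1}$. Bound $\rho\le V(\varepsilon)+\e^{-\lambda\varepsilon}\le K_0\varepsilon^\alpha+\e^{-\lambda\varepsilon}$. First choose $\varepsilon$ with $K_0\varepsilon^\alpha\le1/4$, then $\lambda$ with $\e^{-\lambda\varepsilon}\le1/4$. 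This gives $\rho\le1/2$, with $\varepsilon,\lambda$ depending only on $K_0$ and $\alpha$.

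Combining with Step 2,
$$
v^{*j}(x)\le \e^{\lambda}\,\sup_{j\ge2} j^{1+(1-d)^+}2^{-(j-2)}\cdot A_0x^{d-1}V(x),
$$
and the supremum is finite and depends only on $d$. This is \eqref{convolutionestimate}.

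\textbf{Main obstacle.} The hardest part is the uniformity in $j$ for $x$ of order one, where $V(x)$ need not be small. This is why the decay must come from the exponential Chebyshev bound rather than from powers of $V(x)$. The hypothesis $V(x)\le K_0x^\alpha$ is used only to make $\rho<1$.
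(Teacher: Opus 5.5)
Your proof is correct. Step 1 is the paper's argument: the same split at $x/2$ and the same recursion for the constant, with your $\max\{1,2^{1-d}\}$ in place of $2^{|d-1|}$.

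For the uniform bound you take a genuinely different route. The paper needs no decay in $j$. It fixes $k_0\ge2$ with $c:=d-1+\alpha(k_0-2)\ge0$. Combining the fixed-$k_0$ bound with $V(x)^{k_0-2}\le(K_0x^\alpha)^{k_0-2}$ gives $v^{*k_0}(x)\le CA_0x^cV(x)$. Since $x^cV(x)$ is nondecreasing, it then bounds $v^{*j}=v^{*k_0}*v^{*(j-k_0)}$ simply by integrating against the probability density $v^{*(j-k_0)}$. The finitely many $j<k_0$ come from Step 1 with $V\le1$, and $x^c\le x^{d-1}$ on $(0,1]$ finishes. So your closing remark, that uniformity in $j$ has to come from an exponential Chebyshev bound, overstates the difficulty.

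What your largest-summand argument buys instead is threefold. It is self-contained, since Step 1 is not used. It gives a stronger conclusion, $v^{*j}(x)\le C\,j^{1+(1-d)^+}2^{-j}A_0x^{d-1}V(x)$, with geometric decay in $j$. And it needs a weaker hypothesis: the power law for $V$ enters only through $V(\varepsilon)\le1/4$ for a single $\varepsilon$. Take $\varepsilon=\min\{1,(4K_0)^{-1/\alpha}\}$ so that the hypothesis applies.

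Two small points to tidy:
\begin{itemize}
\item Justify the Step 2 inequality pointwise in $x$. For example, write $v^{*j}(x)$ via Tonelli as the integral of $\prod_k v(y_k)$ over the simplex $\{y\ge0,\ \sum_k y_k=x\}$. Cover it by the sets $\{y_i\ge x/j\}$ and eliminate $y_i$ by a unimodular change of variables.
\item The ``$=$'' in Step 3 should be the chain $\le V(x)\,\P(S_{j-2}\le x)\le V(x)\,\P(S_{j-2}\le 1)$.
\end{itemize}
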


\begin{proof}
 We first prove \eqref{convolutionestimatek} by induction on $k$.
  For $k=1$ it
is the assumption $v(x)\leq A_0x^{d-1}$, with $C(1,d)=1$. Assume
\eqref{convolutionestimatek} holds for $k-1$. Since $u^{d-1}$ is
decreasing in $u$ if $d<1$ and increasing if $d\geq1$, we have
\begin{equation}\label{halfsplit}
u^{d-1}\leq2^{|d-1|}x^{d-1},
\qquad u\in[x/2,x].
\end{equation}
Splitting the convolution at $x/2$,
\begin{equation}\label{convsplit}
v^{*k}(x)
=\Big(\int_0^{x/2}+\int_{x/2}^x\Big)v^{*(k-1)}(x-y)v(y)\dif y.
\end{equation}
For the first integral, the induction hypothesis together with
\eqref{halfsplit} (applied to $u=x-y\in[x/2,x]$) and the monotonicity
of $V$ give
\begin{align*}
\int_0^{x/2}v^{*(k-1)}(x-y)v(y)\dif y
&\leq C(k-1,d)A_0\,2^{|d-1|}x^{d-1}V(x)^{k-2}\int_0^{x/2}v(y)\dif y\\
&\leq C(k-1,d)A_0\,2^{|d-1|}x^{d-1}V(x)^{k-1}.
\end{align*}
For the second integral, \eqref{halfsplit} applied to $u=y$ gives
$v(y)\leq A_0y^{d-1}\leq A_0\,2^{|d-1|}x^{d-1}$, and substituting
$u=x-y$ we obtain
\begin{align*}
\int_{x/2}^xv^{*(k-1)}(x-y)v(y)\dif y
&\leq A_0\,2^{|d-1|}x^{d-1}\int_0^{x/2}v^{*(k-1)}(u)\dif u\\
&\leq A_0\,2^{|d-1|}x^{d-1}V^{*(k-1)}(x)
\leq A_0\,2^{|d-1|}x^{d-1}V(x)^{k-1},
\end{align*}
where in the last step we note that $V^{*(k-1)}(x)\leq V(x)^{k-1}$.  Adding the two bounds proves \eqref{convolutionestimatek}
with $C(k,d)=2^{|d-1|}(C(k-1,d)+1)$.

We now derive \eqref{convolutionestimate}. Choose $k_0\geq2$ with
$d-1+\alpha(k_0-2)\geq0$. Since $V(x)^{k_0-2}\leq(K_0x^\alpha)^{k_0-2}$,
the bound \eqref{convolutionestimatek} for $k_0$ reads
$$
v^{*k_0}(x)\leq C(k_0,d)K_0^{k_0-2}A_0x^{c}V(x),
\qquad c:=d-1+\alpha(k_0-2)\geq0.
$$
Since $x^{c}V(x)$ is increasing in $x$, for $j>k_0$,
$$
\begin{aligned}
v^{*j}(x)
&=\int_0^xv^{*k_0}(x-y)v^{*(j-k_0)}(y)\dif y\\
&\leq C(k_0,d)K_0^{k_0-2}A_0x^{c}V(x)\int_0^xv^{*(j-k_0)}(y)\dif y\\
&\leq C(k_0,d)K_0^{k_0-2}A_0x^{c}V(x).
\end{aligned}
$$
Since $c\geq d-1$ and $x\leq1$, we have $x^c\leq x^{d-1}$, which proves
\eqref{convolutionestimate} for all $j\geq k_0$. For  $j\in\{2,\ldots,k_0-1\}$, \eqref{convolutionestimatek} and
$V(x)\leq1$ give $v^{*j}(x)\leq C_jA_0x^{d-1}V(x)$, and enlarging $C(K_0,\alpha,d)$
to $\max\{C(k_0,d)K_0^{k_0-2},C_2,\ldots,C_{k_0-1}\}$ concludes the proof.
\end{proof}


In the following we introduce a measure change. Since $\E[p_1 m_0^{a_0}]=1$, we may tilt each marginal of the environment by the weight $p_1m_0^{a_0}$: let $\tilde{\P}$ be the probability measure, with expectation $\tilde{\E}$, under which $\xi=(\xi_0,\xi_1,\ldots)$ remains i.i.d.\ and
\begin{equation}\label{tilted-measure}
\dfrac{\dif \tilde{\P}}{\dif \P}\Bigg{|}_{\sigma(\xi_0,\ldots,\xi_{n-1})}=\prod_{i=0}^{n-1}p_1(\xi_i) m_i^{a_0},
\qquad n\geq 1.
\end{equation}
Under $\tilde{\P}$ the increments $\log m_0,\log m_1,\ldots$ of the random walk $(S_n)$ in \eqref{Sn0} are i.i.d., non-lattice, take values in $[\log\underline{m},\log\bar{m}]$, and have mean
$$
\tilde{\mu}:=\tilde{\E}\log m_0=\E\big[p_1 m_0^{a_0}\log m_0\big]\in[\log\underline{m},\log\bar{m}]\subset(0,\infty).
$$
Let $\tilde{U}(B):=\sum_{k\geq 0}\tilde{\P}(S_k\in B)$ be the renewal measure of $(S_k)$ under $\tilde{\P}$. Since $\log m_i\geq\log\underline{m}>0$ a.s.,
\begin{equation}\label{renewal0}
\tilde{U}([s,s+L))\leq 1+\dfrac{L}{\log \underline{m}},
\qquad \forall s\geq 0,\ L>0.
\end{equation}


By renewal theory, we obtain

\begin{lemma}\label{lemma-renewal}
Suppose that $H$ is a nonnegative directly Riemann integrable (DRI) function such that \begin{equation}\label{Ht00}
H(t)\leq C\e^{-c|t|},\qquad t\in\R.
\end{equation}
Then

\begin{equation}\label{renewal00}
\int_0^t H(t-s)\,\tilde{U}(\dif s)\longrightarrow \dfrac{1}{\tilde{\mu}}\int_0^\infty H(s)\dif s,
\qquad t\to\infty.
\end{equation}

\begin{equation}\label{tail}
\int_t^\infty H(t-s)\,\tilde{U}(\dif s)\longrightarrow \frac{1}{\tilde{\mu}}\int_{-\infty}^0 H(s)\,\dif s,
\qquad t\to\infty.
\end{equation}
\end{lemma}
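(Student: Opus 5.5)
The first limit \eqref{renewal00} is the key renewal theorem for the non-lattice random walk $(S_k)$ under $\tilde{\P}$, whose increments $\log m_i$ are i.i.d., non-lattice by \cref{nonlattice}, bounded, and have mean $\tilde\mu\in(0,\infty)$. I would not re-prove that theorem. The plan is to reduce both statements to it, using \eqref{renewal0} and the exponential bound \eqref{Ht00} to handle the tails.

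For \eqref{renewal00}, extend $H$ restricted to $[0,\infty)$ by zero on $(-\infty,0)$, and call the result $H_+$. Since $\tilde U$ is supported on $[0,\infty)$, we have $\int_0^t H(t-s)\,\tilde U(\dif s)=\int_{[0,\infty)} H_+(t-s)\,\tilde U(\dif s)$ up to the contribution of $s$ exactly at $t$. Since $H$ is DRI, so is $H_+$. The key renewal theorem (Feller's version, for a random walk with positive mean on a half line, which here is even a genuine renewal process since the increments are positive) then gives the limit $\tilde\mu^{-1}\int_0^\infty H(s)\,\dif s$.

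For \eqref{tail}, set $H_-(u)=H(u)\1_{\{u<0\}}$. Then $\int_t^\infty H(t-s)\,\tilde U(\dif s)=\int H_-(t-s)\,\tilde U(\dif s)$, modulo the boundary point. This is the convolution of $\tilde U$ with a function supported on the negative half-line, i.e. it looks ahead of $t$. The standard two-sided form of the key renewal theorem for transient walks with positive drift (the Blackwell theorem applied on both sides) yields $\tilde U*g(t)\to\tilde\mu^{-1}\int_{\R} g$ for any DRI $g$ on $\R$. Applying it with $g=H_-$ gives the claim.

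To avoid quoting the two-sided version, I would prove it directly, and this step is the main obstacle. Decompose $H_-$ into pieces on the intervals $[-n-1,-n)$. On each piece Blackwell's theorem gives the limit of $\tilde U([t+n,t+n+1))\cdot(\text{step function})$, and approximating from above and below by step functions is possible by direct Riemann integrability. Dominated convergence over $n$ then needs a summable bound uniform in $t$. For the piece on $[-n-1,-n)$ this bound is
$$\sup_{[-n-1,-n)}H\cdot\tilde U([t+n,t+n+1))\le C\e^{-cn}\Big(1+\frac{1}{\log\underline m}\Big),$$
which follows from \eqref{Ht00} and \eqref{renewal0}. Dominated convergence then yields \eqref{tail}. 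The same dominated-convergence argument also proves \eqref{renewal00}, so the whole lemma rests only on Blackwell's theorem plus these two bounds.

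Finally, I would check that the boundary atom at $s=t$ is harmless. It contributes at most $H(0)\,\tilde U(\{t\})$, and $\tilde U(\{t\})\to0$ as $t\to\infty$ in the non-lattice case, again by Blackwell's theorem.
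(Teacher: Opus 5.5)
Your proposal is correct and follows essentially the same route as the paper: the key renewal theorem for \eqref{renewal00}, and for \eqref{tail} step-function approximations handled by Blackwell's theorem, with \eqref{Ht00} and \eqref{renewal0} making the tails uniformly small in $t$, and $\tilde U(\{t\})\to0$ disposing of the boundary atom. The only cosmetic difference is that you control the tail by dominated convergence over unit blocks $[-n-1,-n)$, whereas the paper uses dyadic upper and lower step functions on $[0,\infty)$ truncated at a level $K$.
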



\begin{proof}

Applying the key renewal theorem \cite[Theorem 3.4.2]{Ross1996} to the DRI function $H\1_{\{t\geq0\}}$ gives \eqref{renewal00}.

For \eqref{tail},
 set $u=s-t$ and $g(u):=H(-u)\1_{\{u\geq0\}}$; then
$$
\int_t^\infty H(t-s)\,\tilde{U}(\dif s)=\int_0^\infty g(u)\,\tilde{U}(t+\dif u),
\qquad
\int_0^\infty g(u)\,\dif u=\int_{-\infty}^0 H(s)\,\dif s,
$$
where $\tilde U(t+\dif u)$ denotes the shifted renewal measure. By \eqref{Ht00}, for $u>0$, we have $|g(u)|\leq C\e^{-cu}$, so $g$ is DRI. Let $g_{(n)}\leq g\leq g^{(n)}$ be its dyadic lower and upper step functions, i.e.
$$
g_{(n)}(u):=\sum_{k\geq0}\Big(\inf_{v\in[k2^{-n},\,(k+1)2^{-n})}g(v)\Big)\,\1_{[k2^{-n},\,(k+1)2^{-n})}(u),
$$
$$
g^{(n)}(u):=\sum_{k\geq0}\Big(\sup_{v\in[k2^{-n},\,(k+1)2^{-n})}g(v)\Big)\,\1_{[k2^{-n},\,(k+1)2^{-n})}(u).
$$

Since $g$ is continuous a.e.\ and bounded, it is Riemann integrable on $[0,K]$ for every $K>0$, hence $\int_0^K(g^{(n)}-g_{(n)})\dif u\to0$. Moreover, taking the supremum of the exponential upper bound on each dyadic interval gives
$$
g^{(n)}(u)\leq C\sum_{k\geq0}\e^{-ck2^{-n}}\,\1_{[k2^{-n},\,(k+1)2^{-n})}(u) \leq C\e^{-c(u-2^{-n})}\leq C\e^{c}\e^{-cu}
\qquad(\text{since }2^{-n}\leq1),
$$
hence $g^{(n)}-g_{(n)}\leq g^{(n)}\leq C\e^{c}\e^{-cu}$ uniformly in $n$ and the tails are uniformly small. Therefore $\int_0^\infty(g^{(n)}-g_{(n)})\dif u\to0$ as $n\to\infty$. By Blackwell's renewal theorem \cite[Theorem 3.4.1]{Ross1996}, for every $0\leq a<b$,
\begin{equation}\label{blackwell}
\tilde U([t+a,t+b))\longrightarrow\frac{b-a}{\tilde\mu},
\qquad t\to\infty.
\end{equation}
We claim that, for each fixed $n$ and each
$\phi\in\{g_{(n)},g^{(n)}\}$,
\begin{equation}\label{stepconvergence}
\int_0^\infty\phi(u)\,\tilde U(t+\dif u)
\longrightarrow\frac1{\tilde\mu}\int_0^\infty\phi(u)\,\dif u,
\qquad t\to\infty.
\end{equation}
For fixed $K>0$, setting $\phi_K:=\phi\,\1_{[0,K)}$, which is a finite sum
$\phi_K=\sum_{i=1}^{N_K}c_i\1_{[a_i,b_i)}$ of interval indicators,
applying \eqref{blackwell} leads to
\begin{equation}\label{truncatedconvergence}
\int_0^\infty\phi_K(u)\,\tilde U(t+\dif u)
=\sum_{i=1}^{N_K}c_i\,\tilde U([t+a_i,t+b_i))
\longrightarrow\frac1{\tilde\mu}\sum_{i=1}^{N_K}c_i(b_i-a_i)
=\frac1{\tilde\mu}\int_0^\infty\phi_K(u)\,\dif u.
\end{equation}
The tails are small uniformly in $t\geq0$ and $n$: recalling
$0\leq g_{(n)}(u)\leq g^{(n)}(u)\leq\bar C\e^{-cu}$ with $\bar C:=C\e^{c}$,
splitting into unit intervals and applying \eqref{renewal0},
\begin{equation}\label{renewaltail}
\begin{aligned}
\int_K^\infty\phi(u)\,\tilde U(t+\dif u)
&\leq\bar C\sum_{j\geq\lfloor K\rfloor}\e^{-cj}\,
\tilde U([t+j,t+j+1))\\
&\leq\bar C\Big(1+\frac1{\log\underline m}\Big)
\sum_{j\geq\lfloor K\rfloor}\e^{-cj}
\leq\bar C\Big(1+\frac1{\log\underline m}\Big)
\frac{\e^{-c(K-1)}}{1-\e^{-c}},
\end{aligned}
\end{equation}
while
\begin{equation}\label{lebesguetail}
\int_K^\infty\phi(u)\,\dif u
\leq\bar C\int_K^\infty\e^{-cu}\dif u
=\frac{\bar C}{c}\,\e^{-cK}.
\end{equation}
Given $\varepsilon>0$, choose $K=K(\varepsilon)$ so large that the
bounds in \eqref{renewaltail}--\eqref{lebesguetail} are both
$<\varepsilon$; then \eqref{truncatedconvergence} yields
$$
\limsup_{t\to\infty}\Big|
\int_0^\infty\phi(u)\,\tilde U(t+\dif u)
-\frac1{\tilde\mu}\int_0^\infty\phi(u)\,\dif u\Big|
\leq(2+\tilde\mu^{-1})\varepsilon,
$$
and since $\varepsilon>0$ is arbitrary, \eqref{stepconvergence}
follows. Finally, $\tilde U(\{t\})\leq\tilde U([t,t+h))$ for every
$h>0$; by \eqref{blackwell} we have
$$
\limsup_{t\to\infty}\tilde U(\{t\})
\leq\frac{h}{\tilde\mu}
\qquad h>0.
$$
Letting $h\downarrow0$ gives $\tilde U(\{t\})\to0$.
Consequently the endpoint in the split at $s=t$ contributes at most
$\|H\|_\infty\,\tilde U(\{t\})\to0$ and does not affect the limit.

Applying this to $g_{(n)}$ and $g^{(n)}$ gives
$$
\frac{1}{\tilde\mu}\int_0^\infty g_{(n)}\,\dif u
\;\leq\;\liminf_{t\to\infty}\int_0^\infty g\,\tilde U(t+\dif u)
\;\leq\;\limsup_{t\to\infty}\int_0^\infty g\,\tilde U(t+\dif u)
\;\leq\;\frac{1}{\tilde\mu}\int_0^\infty g^{(n)}\,\dif u,
$$
and letting $n\to\infty$ proves \eqref{tail}.
\end{proof}


 \begin{lemma}\label{l2.2}
There  exist deterministic constants $t_0\in(0,1)$ and $c_0>0$ such that for almost every $\xi$,
\begin{equation}\label{goodfrequency}
|\Phi_\xi(t)|\leq1-c_0 p_1(\xi_0) t^2,
\qquad |t|\leq t_0.
\end{equation}

\end{lemma}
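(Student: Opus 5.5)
We prove \eqref{goodfrequency} from the one-step decomposition \eqref{decomposition}, written at the level of quenched characteristic functions:
$$
\Phi_\xi(t)=f_0\big(\Phi_{T\xi}(t/m_0)\big)=\sum_{k\geq1}p_k(\xi_0)\,\Phi_{T\xi}(t/m_0)^k .
$$
The plan has three steps. First, we obtain a uniform, environment-free bound $|\Phi_\eta(s)|\leq 1-c_1 s^2$ for $|s|\leq s_0$, valid for almost every $\eta$. Second, we feed this bound into the $k=2$ term of the sum above. Third, we read off the factor $p_1(\xi_0)$.

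\textbf{Step 1: a uniform quadratic bound.} For any random variable $X$ with $\E X=1$ and $\E|X|^p\leq K$, $p\in(1,2]$ (this is \cref{pmoment}), a Taylor expansion gives
$$
\mathrm{Re}\,\Phi(s)=1-\tfrac{s^2}{2}\E\big[X^2\1_{\{X\leq M\}}\big]+R,
$$
where $R$ is controlled by $s^2\,\E\big[X^2\1_{\{X> M\}}\big]$ together with the tail $\P(X>M)$. The second-moment truncation is not directly bounded by the $p$-th moment, so we do not work with $\E X^2$. Instead we use the elementary inequality
$$
1-\mathrm{Re}\,\Phi(s)=\E\big[1-\cos(sX)\big]\geq c\,s^2\,\E\big[X^2\1_{\{X\leq 1/|s|\}}\big].
$$
It then suffices to bound $\E\big[X^2\1_{\{X\leq M\}}\big]$ from below uniformly in the environment. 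Since $\E X=1$, we can bound the upper tail: $\E\big[X\1_{\{X>M\}}\big]\leq K M^{1-p}\leq \tfrac12$ for $M$ large. Hence $\E\big[X\1_{\{X\leq M\}}\big]\geq\tfrac12$, and by Jensen $\E\big[X^2\1_{\{X\leq M\}}\big]\geq \tfrac14$. This gives $1-\mathrm{Re}\,\Phi_\eta(s)\geq c_1s^2$ for $|s|\leq 1/M=:s_0$, uniformly in $\eta$.

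Finally, pass from the real part to the modulus. We have $|\Phi|^2=(\mathrm{Re}\,\Phi)^2+(\mathrm{Im}\,\Phi)^2$, and $|\mathrm{Im}\,\Phi(s)|=|\E\sin(sX)|\leq |s|$. That crude bound is not enough, so we work with the symmetrization instead. Taking $X'$ an independent copy, $|\Phi(s)|^2=\E\cos\big(s(X-X')\big)$. We then apply the same truncation argument to $X-X'$, using $\E(X-X')^2\1_{\{\cdot\}}$ bounded below through non-degeneracy. This is the main obstacle: we need a uniform lower bound on the truncated variance of $W$ under $\Pr_\eta$. We would derive it from the decomposition $W=m_0^{-1}\sum_{j=1}^N W^{(j)}$ itself, since with probability at least $1-\bar p$ there are at least two summands. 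Alternatively, we can avoid the variance entirely and use a simpler route, described next.

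\textbf{Steps 2 and 3: the recursion.} Put $s=t/m_0$, so $|s|\leq |t|/\underline m$. Since $|\Phi_{T\xi}(s)|\leq 1$,
$$
|\Phi_\xi(t)|\leq p_1(\xi_0)\,|\Phi_{T\xi}(s)|+\big(1-p_1(\xi_0)\big)\leq 1-p_1(\xi_0)\big(1-|\Phi_{T\xi}(s)|\big).
$$
By Step 1 applied to $\eta=T\xi$, this is at most
$$
1-p_1(\xi_0)\,c_1 t^2/\bar m^2 .
$$
This yields \eqref{goodfrequency} with $c_0=c_1/\bar m^2$ and $t_0=\min\{s_0\underline m,\tfrac12\}$.

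So the whole difficulty lies in the uniform bound $1-|\Phi_\eta(s)|\geq c_1s^2$ of Step 1. We obtain it via symmetrization, combining the $L^p$ bound of \cref{pmoment} with a uniform non-degeneracy of $W-W'$. The non-degeneracy follows because $\Pr_\eta(|W-W'|\geq\varepsilon)$ is bounded below uniformly. If it failed, $W$ would be nearly constant under $\Pr_\eta$. But then $W=1$ would have to hold approximately, which contradicts the decomposition: with probability $\geq 1-\bar p$ we have $N\geq2$, and under Assumption \ref{boundedm} the value $N/m_0$ then differs from $1$ by a margin that depends only on $\underline m,\bar m,\bar p$.
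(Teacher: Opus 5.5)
\textbf{The gap is in Step 1.} You need a bound $1-|\Phi_\eta(s)|\geq c_1s^2$ with a deterministic $c_1>0$, valid for almost every $\eta$. Under the standing assumptions this bound is false. They require only $p_1>0$ a.s., not that $p_1$ is bounded away from $0$.

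Here is a counterexample. Take the offspring law $p_1\delta_1+(1-p_1)\delta_2$ with $p_1(\xi_0)$ uniform on $(0,1/2)$. All assumptions hold with $\underline m=3/2$ and $\bar m=2$. Then $1-|\Phi_\eta(s)|\leq1-|\Phi_\eta(s)|^2=\Ex_\eta[1-\cos(s(W-W'))]\leq s^2\Var_\eta(W)$. Moreover $\Var_\eta(W)=\sum_{n\geq0}\sigma_n^2/(m_n^2\Pi_n)$ with $\sigma_n^2=p_1(\eta_n)(1-p_1(\eta_n))$. On the event $\{p_1(\eta_k)<\varepsilon \text{ for } k<n\}$, which has probability $(2\varepsilon)^n>0$, this variance is at most $C\varepsilon+C(2/3)^n$. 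So no uniform $c_1$ exists.

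Your non-degeneracy argument breaks at the claim that on $\{N\geq2\}$ the ratio $N/m_0$ stays away from $1$; with $N=2$ and $m_0\approx2$ it does not. The only uniform margin is on $\{N=1\}$, where $N/m_0\leq1/\underline m$. That event has probability $p_1(\xi_0)$, which is exactly why the lemma carries this factor. Note also that if Step 1 were true, Steps 2--3 would be unnecessary: Step 1 at $\eta=\xi$ would already give the stronger bound $1-c_1t^2$.

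The recursion in Steps 2--3 cannot produce the factor $p_1(\xi_0)$ on its own. The triangle inequality $|\sum_kp_k\phi^k|\leq\sum_kp_k|\phi|^k$ throws away the phase difference between $\phi$ and $\phi^k$ (for small $s$, $\phi\approx\e^{\im s}$ and $\phi^k\approx\e^{\im ks}$), which is where the $p_1(\xi_0)$-gain lives. What remains is $1-|\phi|$, which has no uniform lower bound.

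The paper instead extracts the factor directly from the event $\{Z_1=1\}$:
\begin{itemize}
\item Fix $\underline m^{-1}<u<v<1$. On $\{Z_1=1\}$ one has $W=W^{(1)}/m_0$, so Markov's inequality gives $\Pr_\xi(W\leq u)\geq p_1(\xi_0)\bigl(1-1/(\underline m u)\bigr)$.
\item Separately, $\Ex_\xi W=1$ and $\Ex_\xi W^p\leq K$ give $\Pr_\xi(v<W\leq R)\geq d$ with deterministic $R$ and $d$. This one-sided statement is uniform in $\xi$, unlike non-degeneracy.
\item Let $W'$ be an independent copy and $A=\{W\leq u,\ v<W'\leq R\}$. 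Using $1-\cos z\geq z^2/4$ for $|z|\leq1$ yields $1-|\Phi_\xi(t)|^2\geq\frac{(v-u)^2t^2}{4}\,d\,p_1(\xi_0)\bigl(1-1/(\underline m u)\bigr)$ for $|t|\leq1/R$.
\item Finally $\sqrt{1-y}\leq1-y/2$ gives the claim.
\end{itemize}
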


\begin{proof}

Choose $u,v$ such that $\underline{m}^{-1}<u<v<1$. From \eqref{decomposition}, noting that $\Ex_\xi W^{(1)}=\Ex_{T\xi}W=1$ and $W^{(1)}$ is independent of $Z_1$ and $m_0$ under $\Pr_\xi$, Markov's inequality gives
\begin{align*}
\Pr_\xi(W\leq u)&\geq \Pr_\xi(Z_1=1, m_0^{-1}W^{(1)}\leq u)\\
&\geq p_1(1-\Pr_\xi(W^{(1)}> m_0 u))\geq p_1\left(1-\frac1{m_0u}\right)
\geq p_1\left(1-\frac1{\underline m u}\right).
\end{align*}
Also, by Lemma \ref{pmoment} and H\"older's inequality,
$$
1=\Ex_\xi W\leq v+\Ex_\xi (W\1_{\{W>v\}})\leq v+K^{1/p}\Pr_\xi(W>v)^{1-1/p}.
$$
Consequently there exist deterministic $R>1$ and $d>0$ such that $\Pr_\xi(v<W\leq R)\geq d$ for all such environments. Let $W'$ be a conditionally independent copy of $W$ under $\Pr_\xi$. On the event $A:=\{W\leq u,\ v<W'\leq R\}$, the difference $W'-W$ belongs to $[v-u,R]$.

For $|t|\leq1/R$, we use  $1-\cos z\geq z^2/4$ for $|z|\leq1$ in
$$
1-|\Phi_\xi(t)|^2=\Ex_\xi[1-\cos(t(W-W'))]\geq \Ex_\xi[(1-\cos(t(W-W')))\1_A]\geq \frac{(v-u)^2t^2}{4}\Pr_\xi(A).
$$
By the preceding bounds, we have
$$
\Pr_\xi(A)\, \geq \,dp_1\left(1-\frac1{\underline m u}\right).
$$
Then for $|t|\leq1/R$,
\begin{align*}
|\Phi_\xi(t)|
 \leq\sqrt{1- \frac{(v-u)^2t^2p_1\left(1-\frac1{\underline m u}\right)d}{4}}
 \leq 1-\frac{(v-u)^2t^2\left(1-\frac1{\underline m u}\right)dp_1}{8}.
\end{align*}
Choosing $t_0=1/R$ and $c_0=\frac{1}{8} (v-u)^2\left(1-\frac1{\underline m u}\right)d$, we complete the proof.
\end{proof}


\begin{lemma}\label{l2.3n}
Put $g(s)=\bar p s+(1-\bar p)s^2$ and fix $\rho\in(\bar p,1)$. Let $g_k$ denote the $k$-fold iterate of $g$. Then there exist $C_0,d_0>0$ such that
\begin{equation}\label{giteration}
g_k(1-z)\leq C_0\rho^kz^{-d_0},\qquad k\geq0,\quad 0<z\leq1.
\end{equation}

\end{lemma}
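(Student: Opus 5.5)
The estimate \eqref{giteration} comes from splitting the orbit of $1-z$ under $g$ into two phases. In the first phase $g$ repels from the fixed point $1$ at a geometric rate, so the orbit escapes a neighbourhood of $1$ in about $\log(1/z)$ steps. In the second phase $g$ contracts towards $0$ at rate essentially $\bar p<\rho$. The cost of the first phase becomes the factor $z^{-d_0}$, and the second phase produces the factor $\rho^k$. Throughout we use that $g$ is increasing on $[0,1]$ and $g(s)\leq s$ there, so every orbit is monotone and $g_k(1-z)\leq 1$ for all $k$.

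\emph{Step 1 (escape from $1$).} A direct computation gives
$$1-g(1-z)=\bar p z+(1-\bar p)\bigl(2z-z^2\bigr)=z\bigl(1+(1-\bar p)(1-z)\bigr).$$
Hence for $0<z\leq 1/2$ we have $1-g(1-z)\geq \lambda z$, where $\lambda:=1+(1-\bar p)/2>1$. Setting $z_k:=1-g_k(1-z)$, the sequence $(z_k)$ is nondecreasing and satisfies $z_{k+1}\geq\lambda z_k$ as long as $z_k\leq1/2$. So $z_k>1/2$, that is $g_k(1-z)<1/2$, as soon as $k\geq k_1(z)$, where
$$k_1(z):=\Bigl\lceil \frac{\log(1/(2z))}{\log\lambda}\Bigr\rceil \vee 0 \;\leq\; 1+\frac{\log(1/z)}{\log\lambda}.$$

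\emph{Step 2 (contraction to $0$).} For $s\in[0,1/2]$ we have $g(s)/s=\bar p+(1-\bar p)s\leq(1+\bar p)/2<1$. Choose $\varepsilon\in(0,1/2]$ with $\bar p+(1-\bar p)\varepsilon\leq\rho$, which is possible because $\rho>\bar p$. Then there is a fixed integer $M$, depending only on $\bar p$ and $\rho$, such that $g_M$ maps $[0,1/2]$ into $[0,\varepsilon]$. On $[0,\varepsilon]$ we have $g(s)\leq\rho s$, and $g([0,\varepsilon])\subset[0,\varepsilon]$, so $g_j(s)\leq\rho^j s\leq\rho^j$ for all $j\geq0$ and $s\leq\varepsilon$.

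\emph{Step 3 (combination).} Put $k_*:=k_1(z)+M$. If $k\geq k_*$, Steps 1 and 2 give $g_k(1-z)\leq\rho^{\,k-k_*}$. If $k<k_*$, then trivially $g_k(1-z)\leq1\leq\rho^{\,k-k_*}$. So in all cases $g_k(1-z)\leq\rho^{\,k-M-k_1(z)}$. Using the bound on $k_1(z)$,
$$\rho^{-k_1(z)}\leq\rho^{-1}\exp\Bigl(\frac{\log(1/\rho)}{\log\lambda}\,\log(1/z)\Bigr)=\rho^{-1}z^{-d_0},\qquad d_0:=\frac{\log(1/\rho)}{\log\lambda}.$$
This proves \eqref{giteration} with $C_0:=\rho^{-M-1}$. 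The only delicate point is Step 1: near $1$ the map does not contract, which forces the polynomial loss $z^{-d_0}$. The explicit identity for $1-g(1-z)$ makes that step straightforward.
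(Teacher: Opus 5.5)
Your proof is correct and follows the paper's two-phase scheme: geometric escape from $1$ via $1-g(s)=(1-s)(1+(1-\bar p)s)$, then contraction at rate $\rho$ near $0$, with $d_0=\log(1/\rho)/\log\lambda$ and the trivial bound $g_k\le 1$ for small $k$. The only differences are that you escape to the fixed level $1/2$ and then spend a bounded number $M$ of steps going down to $\varepsilon$, whereas the paper uses the single threshold $s_*=(\rho-\bar p)/(1-\bar p)$ with $\lambda=1+(1-\bar p)s_*$; also, Step~1 only gives $g_k(1-z)\le 1/2$ for $k\ge k_1(z)$, not the strict inequality (equality holds at $z=1/2$, $k=0$), but the non-strict version is all Step~3 uses.
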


\begin{proof}

 To verify this, note first that $g(0)=0$, $g(1)=1$ and
$$
g(s)-s=-(1-\bar p)s(1-s)<0,\qquad 0<s<1,
$$
so every orbit $s_k:=g_k(1-z)$ is  nonincreasing and converges to $0$. We treat the two phases of the iteration separately.

Choose
$$
s_*:=\frac{\rho-\bar p}{1-\bar p}\in(0,1),
\qquad\text{so that}\qquad
\bar p+(1-\bar p)s_*=\rho.
$$
Since $g(s)/s=\bar p+(1-\bar p)s$ is nondecreasing in $s$, we have $g(s)\leq\rho s$ for $0\leq s\leq s_*$. Hence,
\begin{equation}\label{Contractionzone}
g_j(1-z)\leq s_*
\quad\Longrightarrow\quad
g_{j+i}(1-z)\leq\rho^{i}s_*,\qquad i\geq0.
\end{equation}
Since
$$
1-g(s)=(1-s)\big[1+(1-\bar p)s\big],
$$
for $s_*\leq s<1$,
\begin{equation}\label{Escapezone}
1-g(s)\geq\lambda(1-s),
\qquad \lambda:=1+(1-\bar p)s_*>1.
\end{equation}
  Set $k_1(z):=\inf\{j\geq0:g_j(1-z)\leq s_*\}$.

If $1-z\leq s_*$, then $k_1(z)=0$.

If $1-z>s_*$, applying \eqref{Escapezone} at  $j<k_1(z)$ gives $1-g_j(1-z)\geq\lambda^{j}z$. Choosing $j=k_1(z)-1$, by the definition of $k_1$,  $g_j(1-z)>s_*$, and then  $\lambda^{k_1(z)-1}z<1-s_*$. Since $\log(1-s_*)<0$,
\begin{equation}\label{k1z}
k_1(z)\leq 1+\frac{\log\big((1-s_*)/z\big)}{\log\lambda}
\leq C+\frac{\log(1/z)}{\log\lambda},
\qquad C:=1.
\end{equation}

Fix $z\in(0,1]$ and put
$
d_0:=\frac{\log(1/\rho)}{\log\lambda},$
so that
$$\rho^{-\log(1/z)/\log\lambda}=\e^{d_0\log(1/z)}=z^{-d_0}.
$$
If $k\geq k_1(z)$, \eqref{Contractionzone} together with the above bound on $k_1(z)$ yields that
$$
g_k(1-z)\leq\rho^{k-k_1(z)}s_*
\leq\rho^{-C}\rho^{k}z^{-d_0}.
$$
If $k<k_1(z)$, by \eqref{k1z} and $\rho\in (0,1)$ we have $\rho^{k}>\rho^{C}z^{d_0}$, hence $$ \rho^{-C}\rho^{k}z^{-d_0}>1\geq g_k(1-z).$$ Taking $C_0:=\rho^{-C}$,  we obtain \eqref{giteration} with $d_0=\log(1/\rho)/\log\lambda$.
\end{proof}


\begin{lemma}\label{l2.3}
Fix $\rho\in(\bar p,1)$. Let $\beta:=\log(1/\rho)/(2\log\bar m)$. Then
\begin{equation}\label{Dbetabound}
D_\beta(\xi)
:=\sup_{t\in\R}(1+|t|)^\beta|\Phi_\xi(t)|
<\infty, \quad a.s.,
\end{equation}
and for any $r\geq1$, it holds that
\begin{equation}\label{fourierconstant}
 \E {  [D_\beta(\xi)^r]}<\infty.
\end{equation}
\end{lemma}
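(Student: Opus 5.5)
The plan is to bound $\Phi_\xi$ by iterating the branching property over $n$ generations, compare the resulting composition of generating functions with the deterministic iterate $g_n$ of Lemma \ref{l2.3n}, and close with Lemma \ref{l2.2} at a generation where $p_1$ is not too small. Conditioning on the first $n$ generations gives
$$
\Phi_\xi(t)=f_0\circ\cdots\circ f_{n-1}\big(\Phi_{T^n\xi}(t/\Pi_n)\big).
$$
Since $p_0=0$ and $p_1(\xi_k)\leq\bar p$, for $|u|\leq1$ we have
$$
|f_k(u)|\leq p_1(\xi_k)|u|+(1-p_1(\xi_k))|u|^2\leq g(|u|),
$$
where the last step uses $|u|^2\leq|u|$. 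Because $g$ is nondecreasing on $[0,1]$, induction gives $|\Phi_\xi(t)|\leq g_n\big(|\Phi_{T^n\xi}(t/\Pi_n)|\big)$ for every $n$. Similarly, $|f_k(u)|\leq|u|$ yields the monotonicity
$$
|\Phi_{T^n\xi}(s)|\leq|\Phi_{T^k\xi}(s\,\Pi_n/\Pi_k)|,\qquad k\geq n .
$$

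\textbf{Choice of generations.} For $|t|\leq t_0$ the bound is trivial, so take $|t|>t_0$. Let $n=n(t)$ be the first index with $\Pi_n\geq|t|/t_0$; by Assumption \ref{boundedm}, $t/\Pi_n\in[t_0/\bar m,t_0]$ and $|t|\leq t_0\bar m^{\,n}$. Applying Lemma \ref{l2.2} directly at time $n$ would produce a factor $p_1(\xi_n)^{-d_0}$, which need not have moments. Instead, fix $\eta>0$ (to be chosen depending on $r$) and set
$$
\tau_n:=\inf\{k\geq n:\ p_1(\xi_k)\geq\eta\}.
$$
With $k=\tau_n$, we have $|t|/\Pi_k\in[t_0\bar m^{-1-(k-n)},t_0]$, so Lemma \ref{l2.2} gives
$$
|\Phi_{T^k\xi}(t/\Pi_k)|\leq1-z,\qquad z:=c_0\eta t_0^2\bar m^{-2-2(k-n)}.
$$
Combining the two monotonicity facts with Lemma \ref{l2.3n} then gives
$$
|\Phi_\xi(t)|\leq g_n(1-z)\leq C_0\rho^n z^{-d_0}\leq C_\eta\,\rho^n\,\bar m^{2d_0(\tau_n-n)}.
$$
Since $(1+|t|)^\beta\leq C\bar m^{n\beta}=C\rho^{-n/2}$ by the choice of $\beta$, we get
$$
D_\beta(\xi)\leq C+C_\eta\sup_{n\geq0}\rho^{n/2}\,\bar m^{2d_0(\tau_n-n)}.
$$
Only half of the available decay is spent here; the remaining factor $\rho^{n/2}$ is what absorbs the supremum over $n$.

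\textbf{Moments.} The main obstacle is the final step: controlling the random waiting times $\tau_n-n$ uniformly in $n$. Because the $\xi_k$ are i.i.d., $\tau_n-n$ is geometric with success probability $q_\eta:=\P(p_1\geq\eta)$. Bounding the supremum by the sum,
$$
\E\big[D_\beta^r\big]\leq C+C\sum_{n}\rho^{nr/2}\,\E\big[\bar m^{2d_0r(\tau_n-n)}\big],
$$
and the expectation is finite and independent of $n$ as soon as $\bar m^{2d_0r}(1-q_\eta)<1$. Since $p_1>0$ a.s., $q_\eta\to1$ as $\eta\downarrow0$, so for each $r\geq1$ we choose $\eta$ small enough. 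The constant $C_\eta$ grows as $\eta$ shrinks, but it is deterministic and therefore harmless. This proves \eqref{fourierconstant}, and \eqref{Dbetabound} follows because a random variable with a finite moment is a.s.\ finite.
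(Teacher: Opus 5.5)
Your proof is correct and follows essentially the same route as the paper. You use the same first-passage index $n$ with $\Pi_n\ge|t|/t_0$, the same geometric waiting time to a generation with $p_1\ge\eta$ where Lemma~\ref{l2.2} is applied, Lemma~\ref{l2.3n} to obtain $\rho^n z^{-d_0}$, and the same sup-by-sum moment bound that spends only half of the decay $\rho^{n}$. The one cosmetic difference is that you iterate $g$ for $n$ steps and use $|f_k(u)|\le|u|$ for the remaining $\tau_n-n$ steps, which gives the factor $\bar m^{2d_0(\tau_n-n)}$ instead of the paper's $(\rho\bar m^{2d_0})^{L_n}$ and only slightly strengthens the smallness requirement on $\eta$.
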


\begin{proof}

Let $d_0=\log(1/\rho)/\log\lambda$, $M=\max\{1,\rho\bar m^{2d_0}\}$ and   put
$$
q_\varepsilon:=\P(p_1<\varepsilon).
$$
Fix $r\ge1$. Since $p_1>0$ a.s., $q_\varepsilon\downarrow0$ as $\varepsilon\downarrow0$.
  Choose $\varepsilon\in(0,1)$ so small that $q_\varepsilon M^r<1$,  and let
$$
L_n:=\inf\{\ell\geq0:p_1(\xi_{n+\ell})\geq\varepsilon\}.
$$
Then $L_n<\infty$ a.s. and, for each deterministic $n\geq0$,
$\P(L_n=\ell)=(1-q_\varepsilon)q_\varepsilon^\ell$.
For $|t|\geq t_0$, set $n=\inf\{k:\Pi_k\geq |t|/t_0\}$. Then
\begin{equation}\label{Pinbound}
\frac{t_0}{\bar m^{L_n+1}}\leq\frac{|t|}{\Pi_{n+L_n}}\leq t_0.
\end{equation}

Since $p_0(\xi_i)=0$ and $s^j\leq s^2$ for $j\geq2$ and $s\in[0,1]$, every
offspring generating function is dominated by $g$:
\begin{equation}\label{fdomg}
f_i(s)=p_1(\xi_i)s+\sum_{j\ge2}p_j(\xi_i)s^j
\leq p_1(\xi_i)s+\big(1-p_1(\xi_i)\big)s^2
\leq \bar p s+(1-\bar p)s^2=g(s).
\end{equation}
Iterating the quenched branching identity
$\Phi_\xi(t)=f_0\big(\Phi_{T\xi}(t/m_0)\big)$ for $k:=n+L_n$ steps, and
using the monotonicity of each $f_i$ on $[0,1]$ together with
$|\Phi|\le1$ and \eqref{fdomg}, we obtain
\begin{equation}\label{Phiiterate}
|\Phi_\xi(t)|
\leq f_0\circ f_1\circ\cdots\circ f_{k-1}
\Big(\big|\Phi_{T^{k}\xi}\big(t/\Pi_{k}\big)\big|\Big)
\leq g_k\Big(\big|\Phi_{T^{k}\xi}\big(t/\Pi_{k}\big)\big|\Big).
\end{equation}
By the definition of $L_n$, the terminal environment is good:
$p_1(\xi_{n+L_n})\ge\varepsilon$; and the right half of \eqref{Pinbound}
gives $|t|/\Pi_{n+L_n}\leq t_0$. Hence, applying Lemma \ref{l2.2} at
the environment $T^{n+L_n}\xi$, and using the left half of \eqref{Pinbound}
 yields
\begin{equation}\label{Phitail}
\Big|\Phi_{T^{n+L_n}\xi}\Big(\frac{t}{\Pi_{n+L_n}}\Big)\Big|
\leq 1-c_0\varepsilon\Big(\frac{t}{\Pi_{n+L_n}}\Big)^2
\leq 1-z,
\qquad
z:=c_0\varepsilon t_0^2\,\bar m^{-2(L_n+1)},
\end{equation}
where $z\le1$ if we choose  $c_0$ sufficiently small.
Inserting \eqref{Phitail} into \eqref{Phiiterate} and applying Lemma \ref{l2.3n}
with $k=n+L_n$, we get
\begin{align*}
|\Phi_\xi(t)|
\leq g_{n+L_n}(1-z)
\leq C_0\rho^{\,n+L_n}z^{-d_0}
=C_0\big(c_0\varepsilon t_0^2\big)^{-d_0}\bar m^{2d_0}\;
\rho^{\,n}\big(\rho\bar m^{2d_0}\big)^{L_n}.
\end{align*}
  Put
$$
C_\varepsilon:=C_0\big(c_0\varepsilon t_0^2\big)^{-d_0}\bar m^{2d_0}.
$$
  Then for all $|t|\ge t_0$ and
$n=\inf\{k:\Pi_k\geq|t|/t_0\}$,
\begin{equation}\label{Phixit}
|\Phi_\xi(t)|\le C_\varepsilon\rho^n M^{L_n}.
\end{equation}

Define
$$
K_\varepsilon(\xi):=\sup_{n\ge0}\rho^{n/2}M^{L_n}.
$$
For nonnegative
random variables, $\big(\sup_n X_n\big)^r=\sup_n X_n^r\le\sum_n X_n^r$.
Applying this with $X_n=\rho^{n/2}M^{L_n}$ and taking expectations, we have
\begin{equation}\label{Kmomentstep}
\E K_\varepsilon^r
\le\sum_{n\ge0}\rho^{nr/2}\,\E M^{rL_n}.
\end{equation}
Recalling that
$\P(L_n=\ell)=(1-q_\varepsilon)q_\varepsilon^\ell$, we obtain
\begin{equation}\label{geomoment}
\E M^{rL_n}
=\sum_{\ell\ge0}M^{r\ell}(1-q_\varepsilon)q_\varepsilon^\ell
=\frac{1-q_\varepsilon}{1-q_\varepsilon M^r}<\infty
\qquad\text{uniformly in }n.
\end{equation}
Inserting \eqref{geomoment} into \eqref{Kmomentstep} and noting that $\rho^{r/2}<1$,
\begin{equation}\label{Kmoment}
\E K_\varepsilon^r
\le\frac{1-q_\varepsilon}{(1-q_\varepsilon M^r)(1-\rho^{r/2})}<\infty.
\end{equation}
Recall that
$n=\inf\{k:\Pi_k\ge|t|/t_0\}$ and $\Pi_k\le\bar m^k$, hence
\begin{equation}\label{nlowerbound}
n\ge\frac{\log(|t|/t_0)}{\log\bar m},
\qquad |t|\ge t_0.
\end{equation}
Then $$
\rho^{n/2}
=\exp\Big(-\tfrac{n}{2}\log(1/\rho)\Big)
\le\exp\Big(-\frac{\log(1/\rho)}{2\log\bar m}\log(|t|/t_0)\Big)
=\Big(\frac{|t|}{t_0}\Big)^{-\beta},
$$
where $\beta=\log(1/\rho)/(2\log\bar m)>0$. Consequently, by \eqref{Phixit}, for $|t|\ge t_0$,
\begin{equation}\label{Fourierdecay}
|\Phi_\xi(t)|
\le C_\varepsilon K_\varepsilon(\xi)\Big(\frac{|t|}{t_0}\Big)^{-\beta}
=C_\varepsilon t_0^\beta K_\varepsilon(\xi)\,|t|^{-\beta}.
\end{equation}
Multiplying \eqref{Fourierdecay} by $(1+|t|)^\beta$ and using
$(1+|t|)^\beta|t|^{-\beta}\le(1+t_0^{-1})^\beta$ for
$|t|\ge t_0$, while $|\Phi_\xi|\le1$ for $|t|<t_0$, we obtain
\begin{equation}\label{Dbetaboundproof}
D_\beta(\xi)
=\sup_{t\in\R}(1+|t|)^\beta|\Phi_\xi(t)|
\le(1+t_0^{-1})^\beta\big(1+C_\varepsilon K_\varepsilon(\xi)\big).
\end{equation}
Combining \eqref{Dbetaboundproof} with \eqref{Kmoment} and using the elementary
inequality $(a+b)^r\le2^{r-1}(a^r+b^r)$, we arrive at \eqref{fourierconstant}.
\end{proof}

\begin{lemma}[Quenched density estimates]\label{quenchedregularity}
Under \cref{boundedm}, we have

(i) Define
\begin{equation}\label{Adefinition}
A(\xi)=\frac{1}{2\pi}\int_\R|\Phi'_\xi(t)|\dif t.
\end{equation}
Then $\E[A(\xi)^r]<\infty$ for every $r\geq1$.

(ii)  For  almost every $\xi$, the quenched law has a continuous density $w_\xi$ on $(0,\infty)$,  and $$\sup_{x>0}xw_\xi(x)\leq A(\xi).$$

(iii)
There exist deterministic constants $b,C>0$ such that for almost every $\xi$,
\begin{equation}\label{quenchedbounds}
F_\xi(x)\leq Cx^{2b},\qquad 0<x\leq1.
\end{equation}

(iv) There exists a nonnegative random variable $ B(\xi)$ with \begin{equation}\label{regularitymoments0}
\E[B(\xi)^r]<\infty,\qquad r\geq1,
\end{equation}
such that for almost every $\xi$,
\begin{equation}\label{quenchedbounds0}
w_\xi(x)\leq B(\xi)x^{b-1},\qquad 0<x\leq1;
\end{equation}
and uniformly in $j\geq2$ and $0<x\leq1$, we have
\begin{equation}\label{conditionalconvolution}
w_\xi^{*j}(x)\leq CB(\xi)x^{b-1}F_\xi(x).
\end{equation}

\end{lemma}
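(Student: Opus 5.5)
The plan is to prove (iii) first, since it is deterministic and only uses the Laplace transform. After that come (i)–(ii) through Fourier inversion of $\Phi'_\xi$, and finally (iv) by iterating the decomposition \eqref{decomposition} along the ``single child'' line and invoking Lemma \ref{l2.4}.

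\emph{Step (iii).} Let $\varphi_\xi(\lambda)=\Ex_\xi e^{-\lambda W}$. The branching identity gives $\varphi_\xi(\lambda)=f_0(\varphi_{T\xi}(\lambda/m_0))\le g(\varphi_{T\xi}(\lambda/m_0))$ by \eqref{fdomg}. Iterating $k$ steps gives $\varphi_\xi(\lambda)\le g_k(\varphi_{T^k\xi}(\lambda/\Pi_k))$.
\begin{itemize}
\item Choose $k$ minimal with $\Pi_k\ge\lambda$. Then $\lambda/\Pi_k\in[\bar m^{-1},1]$.
\item By Lemma \ref{pmoment} and the Paley--Zygmund/H\"older argument of Lemma \ref{l2.2}, there is a deterministic $d>0$ with $\Pr_{\eta}(W>v)\ge d$ for all $\eta$. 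Hence $\varphi_{T^k\xi}(\lambda/\Pi_k)\le 1-z_0$ for a deterministic $z_0>0$.
\item Lemma \ref{l2.3n} then gives $\varphi_\xi(\lambda)\le C\rho^k\le C\lambda^{-\log(1/\rho)/\log\bar m}$.
\end{itemize}
Finally $F_\xi(x)\le e\,\varphi_\xi(1/x)$, which gives \eqref{quenchedbounds} with $2b\le\log(1/\rho)/\log\bar m$. We also shrink $b$ so that $\bar p\,\bar m^{2b}<1$, which is needed in (iv).

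\emph{Steps (i)--(ii).} Since $\Phi'_\xi(t)=\im\Ex_\xi[We^{\im tW}]$, integrability of $\Phi'_\xi$ lets us invert $\Phi'_\xi$ as the Fourier transform of $\im x\,\mu_\xi(\dif x)$. This yields a continuous function $xw_\xi(x)$ bounded by $A(\xi)$, so (ii) follows from (i).

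For (i), apply $\Ex_\xi[W e^{\im tW}]$ to the decomposition, treating the marked subtree separately:
$$
|\Phi'_\xi(t)|\le |\Phi'_{T\xi}(t/m_0)|\Big(\frac{p_1}{m_0}+\frac1{m_0}\sum_{j\ge2}jp_j|\Phi_{T\xi}(t/m_0)|\Big)\le \frac{|\Phi'_{T\xi}(t/m_0)|}{m_0}\big(p_1+\bar m\,|\Phi_{T\xi}(t/m_0)|\big).
$$
Iterate this $k\approx\log|t|/\log\bar m$ steps, until $|t|/\Pi_k\approx1$, and use $|\Phi'|\le1$. The result is
$$
|\Phi'_\xi(t)|\lesssim\Pi_k^{-1}\prod_{i<k}\big(p_1(\xi_i)+\bar m\,|\Phi_{T^{i+1}\xi}(t/\Pi_{i+1})|\big).
$$
By Lemma \ref{l2.3}, $|\Phi_{T^{i+1}\xi}(t/\Pi_{i+1})|\le D_\beta(T^{i+1}\xi)(|t|/\Pi_{i+1})^{-\beta}$, which is tiny for the early indices.
\begin{itemize}
\item For those early indices each factor is $\le\rho$, since $\rho>\bar p$.
\item The last $O(1)$ indices contribute a bounded factor.
\end{itemize}
This yields $|\Phi'_\xi(t)|\le \mathcal K(\xi)|t|^{-1-\beta'}$ for some $\beta'>0$. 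Here $\mathcal K(\xi)$ is controlled by a supremum over $n$ of $\rho^{n/2}$ times products of the $D_\beta(T^i\xi)$. Its moments are bounded exactly as in \eqref{Kmomentstep}--\eqref{Kmoment}, using \eqref{fourierconstant} and stationarity.

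This step is the main obstacle: making the ``good factor'' $\rho$ appear in all but $O(1)$ steps with a random constant of all moments. If needed, I would again use the stopping indices $L_n$ from Lemma \ref{l2.3} to control the bad indices.

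\emph{Step (iv).} From the decomposition, $w_\xi(x)=\sum_j p_j m_0\, w_{T\xi}^{*j}(m_0x)$. By (ii), the density $w_{T\xi}$ satisfies the hypothesis of Lemma \ref{l2.4} with $d=0$ and $A_0=A(T\xi)$. Combining \eqref{convolutionestimatek} (with $k=2$, then the monotone-tail argument for larger $j$) and (iii) gives $w_{T\xi}^{*j}(y)\le C A(T\xi)y^{2b-1}$ for $j\ge2$ and $y\le1$. Therefore
$$
w_\xi(x)\le p_1m_0 w_{T\xi}(m_0x)+CA(T\xi)m_0^{2b}x^{2b-1}.
$$
Iterate while $\Pi_kx\le1$, and at the stopping level use $yw(y)\le A$. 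This gives
$$
w_\xi(x)\le x^{2b-1}\,C\sum_{k\ge0}(\bar p\,\bar m^{2b})^kA(T^{k+1}\xi)=:B(\xi)x^{2b-1}\le B(\xi)x^{b-1}.
$$
By Minkowski's inequality and (i), $B$ has all moments, which proves \eqref{quenchedbounds0}.

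Then \eqref{conditionalconvolution} is exactly \eqref{convolutionestimate} applied with $v=w_\xi$, $A_0=B(\xi)$, $d=b$, $K_0=C$ and $\alpha=2b$. The constants are deterministic because they depend only on $K_0,\alpha,d$.
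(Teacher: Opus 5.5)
\textbf{The gap is in (i).} In your pointwise iteration, the factor $p_1(\xi_i)+\bar m\,|\Phi_{T^{i+1}\xi}(t/\Pi_{i+1})|$ is below $\rho$ only when $|t|/\Pi_{i+1}$ exceeds a \emph{random} threshold of order $D_\beta(T^{i+1}\xi)^{1/\beta}$. Since $D_\beta$ is unbounded, three things in your sketch fail:
\begin{itemize}
\item The bad indices are not confined to ``the last $O(1)$ steps''.
\item The stopping times $L_n$ of \cref{l2.3} do not help. They detect small $p_1$, not large $D_\beta$.
\item Controlling $\mathcal K(\xi)$ by products of the $D_\beta(T^i\xi)$ would need moments of products of unboundedly many dependent factors. \eqref{fourierconstant} and stationarity do not give that.
\end{itemize}

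\textbf{The missing idea is to count the bad steps.} Put $\theta_0:=\bar p+\bar m\eta<1$ and $C(\xi):=\max\{1,(D_\beta(\xi)/\eta)^{1/\beta}\}$. Cap every bad factor by $|f_i'|\le m_i\le\bar m$, and take $k$ with $\Pi_k\le|t|<\Pi_{k+1}$. A bad index $i$ then forces $C(T^{i+1}\xi)>|t|/\Pi_{i+1}\ge\underline m^{\,k-1-i}$. Let $J_k$ be the largest such $k-1-i$, with $J_k=-1$ if there is none. Then
\[
|\Phi'_\xi(t)|\le \Pi_k^{-1}\,\theta_0^{\,k}\,(\bar m/\theta_0)^{J_k+1},
\qquad
\P(J_k\ge n)\le\sum_{j\ge n}\P\big(C(\xi)>\underline m^{\,j}\big),
\]
where the second bound comes from a union bound and stationarity. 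Since $C$ has all polynomial moments, $J_k$ has exponential moments of every order, uniformly in $k$. Summing over $k$ as in \eqref{Kmomentstep} then gives $|\Phi'_\xi(t)|\le\mathcal K(\xi)|t|^{-1-\beta'}$ with $\E\mathcal K^r<\infty$. With this patch your route works, and it gives pointwise decay of $\Phi'_\xi$, which is more than (i) needs.

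\textbf{How the paper avoids this.} It uses an integrated recursion for $I_\xi(M)=\int_0^M|\Phi'_\xi|$:
\[
I_\xi(M)\le\bar m\,C(T\xi)+\theta_0\, I_{T\xi}(M).
\]
The random bad range $[0,\bar mC(T\xi)]$ is paid for additively by its length, since $|\Phi'|\le1$. The substitution $s=t/m_0$ absorbs the factor $1/m_0$. Iterating gives $A(\xi)\lesssim\sum_k\theta_0^kC(T^{k+1}\xi)$, which is linear in the $C$'s, so Minkowski's inequality gives all moments at once with no counting.

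\textbf{Parts (ii)--(iv)} follow the paper. Working with $x^{1-2b}w_\xi$ under $\bar p\bar m^{2b}<1$, instead of $x^{1-b}w_\xi$ under $\bar p\bar m^{b}<1$, is harmless. Two small fixes are needed in (iv):
\begin{itemize}
\item The $j\ge2$ bound at level $k$ is evaluated at $\Pi_{k+1}x$. So you must stop once $\Pi_{k+1}x>1$, not once $\Pi_kx>1$; the paper iterates while $\Pi_kx<1/\bar m$.
\item For $x\in(1/\bar m,1]$ the terminal term is $A(\xi)$, so $B(\xi)$ must also contain $A(\xi)$. The paper sums $A(T^k\xi)$ from $k=0$.
\end{itemize}
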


\begin{proof}

(i) Fix $\rho\in(\bar p,1)$ and let $\beta,g,D_\beta$ be as in \cref{l2.3}.
Differentiating the quenched branching identity
$\Phi_\xi(t)=f_0\big(\Phi_{T\xi}(t/m_0)\big)$ gives
\begin{equation}\label{Phiderivative}
\Phi'_\xi(t)=\frac1{m_0}f'_0\big(\Phi_{T\xi}(t/m_0)\big)\,
\Phi'_{T\xi}(t/m_0).
\end{equation}
First, by \eqref{uniformpositive},
\begin{equation}\label{Phiprimetrivial}
|\Phi'_\xi(t)|
=\big|\im\,\Ex_\xi\big[W\e^{\im tW}\big]\big|
\le\Ex_\xi W=1.
\end{equation}
Second, since $f_0$  has nonnegative coefficients,
$|f'_0(\Phi_{T\xi}(s))|\le f'_0(|\Phi_{T\xi}(s)|)$; and since
$s^{j-1}\le s$ for $j\ge2$ and $s\in[0,1]$,
\begin{equation}\label{fprimebound}
f'_0(s)=p_1+\sum_{j\ge2}jp_js^{j-1}
\le p_1+s\sum_{j\ge2}jp_j
\le p_1+m_0s
\le\bar p+\bar ms.
\end{equation}
Choose $\eta>0$ with $\theta_0:=\bar p+\bar m\eta<1$, and put
$$
C(\xi):=\max\big\{1,\big(D_\beta(\xi)/\eta\big)^{1/\beta}\big\},
\qquad
I_\xi(M):=\int_0^M|\Phi'_\xi(t)|\dif t,
$$
where $D_\beta(\xi)$ is defined by \eqref{Dbetabound}. Then
\begin{equation}\label{Phipsileeta}
|\Phi_\xi(s)|
\le\frac{D_\beta(\xi)}{(1+s)^\beta}
\le D_\beta(\xi)s^{-\beta}
\le\eta,
\qquad s\ge C(\xi).
\end{equation}

We now claim that
\begin{equation}\label{Irecursion}
I_\xi(M)\le\bar m C(T\xi)+\theta_0I_{T\xi}(M).
\end{equation}
  If $M\leq\bar mC(T\xi)$, then
$I_\xi(M)\leq M\leq\bar mC(T\xi)$ by \eqref{Phiprimetrivial},
so \eqref{Irecursion} holds. For $M>\bar mC(T\xi)$,
split the integral defining $I_\xi(M)$ at
$\bar mC(T\xi)$. By \eqref{Phiprimetrivial},
$$
\int_0^{\bar mC(T\xi)}|\Phi'_\xi(t)|\dif t\le\bar mC(T\xi).
$$
For $t\ge\bar mC(T\xi)$, we have
$s=t/m_0\geq C(T\xi)$ (recall $m_0\le\bar m$), so
\eqref{Phipsileeta} applied at the environment $T\xi$ and
\eqref{fprimebound} yield
$$
f'_0\big(\big|\Phi_{T\xi}\big(t/m_0\big)\big|\big)
\le\bar p+\bar m\eta=:\theta_0.
$$
Hence, using \eqref{Phiderivative} and substituting $s=t/m_0$,
\begin{align*}
\int_{\bar mC(T\xi)}^M|\Phi'_\xi(t)|\dif t
&\leq\frac{\theta_0}{m_0}\int_{\bar mC(T\xi)}^M
\big|\Phi'_{T\xi}\big(t/m_0\big)\big|\dif t\\
&=\theta_0\int_{\bar mC(T\xi)/m_0}^{M/m_0}
|\Phi'_{T\xi}(s)|\dif s
\leq\theta_0I_{T\xi}(M),
\end{align*}
where in the last step we use $M/m_0\le M$. Adding the two parts proves \eqref{Irecursion}.

Iterate \eqref{Irecursion} $n$ times at fixed $M$: since $|\Phi'_\xi(t)|\leq1$, the remainder satisfies
$I_{T^n\xi}(M)\leq M$, hence
$$
I_\xi(M)
\leq\bar m\sum_{k=0}^{n-1}\theta_0^kC(T^{k+1}\xi)
+\theta_0^nM.
$$
Letting $n\to\infty$ at fixed $M$, and then $M\to\infty$, we get
\begin{equation}\label{Phiprimeintegrable}
\int_0^\infty|\Phi'_\xi(t)|\dif t
\le\bar m\sum_{k\ge0}\theta_0^kC(T^{k+1}\xi).
\end{equation}
By
$C(\xi)\le1+\eta^{-1/\beta}D_\beta(\xi)^{1/\beta}$, applying   \eqref{fourierconstant} of Lemma \ref{l2.3},  we have $\E C(\xi)^r<\infty$ for each
$r\geq1$. By stationarity $\E C(T^{k+1}\xi)^r=\E C(\xi)^r$, and
Minkowski's inequality yields
\begin{equation}\label{Aonemoment}
\Big(\E\Big[\sum_{k\ge0}\theta_0^kC(T^{k+1}\xi)\Big]^r\Big)^{1/r}
\le\sum_{k\ge0}\theta_0^k\big(\E C(\xi)^r\big)^{1/r}
=\frac{(\E C(\xi)^r)^{1/r}}{1-\theta_0}<\infty.
\end{equation}
Combining \eqref{Phiprimeintegrable} with \eqref{Aonemoment} we obtain
$\E A(\xi)^r<\infty$ for every $r\ge1$.

(ii) The finite measure $x\Pr_\xi(W\in\dif x)$ has total mass
$\Ex_\xi W=1$ and Fourier transform
$$
\int_0^\infty\e^{\im tx}\,x\Pr_\xi(W\in\dif x)
=-\im\Phi'_\xi(t),
$$
which is integrable by \eqref{Phiprimeintegrable}. Hence, Fourier inversion
 gives $x\Pr_\xi(W\in\dif x)$ a bounded continuous density. Dividing it by $x$ on
$(0,\infty)$, we conclude that the quenched law of $W$ admits a continuous
density $w_\xi$ on $(0,\infty)$ with
$$
\sup_{x>0}xw_\xi(x)
\leq\frac1{2\pi}\int_\R|\Phi'_\xi(t)|\dif t
=A(\xi).
$$
Let $q_\xi$ denote the bounded, uniformly continuous density of the weighted measure $x\Pr_\xi(W\in\dif x)$ obtained by Fourier inversion. It vanishes on $(-\infty,0]$ and satisfies $q_\xi(x)=xw_\xi(x)$ for $x>0$ and $\|q_\xi\|_\infty\leq A(\xi)$. Extend $w_\xi$ by zero to $(-\infty,0]$. By symmetry and Tonelli's theorem, for every $j\geq2$ and $x>0$,
$$
xw_\xi^{*j}(x)
=j\big(q_\xi*w_\xi^{*(j-1)}\big)(x)
\leq jA(\xi).
$$
Since $q_\xi$ is bounded and uniformly continuous and $w_\xi^{*(j-1)}$ is a probability density, the convolution on the right is continuous. Thus $w_\xi^{*j}$ is continuous on $(0,\infty)$.

(iii)
Let $\varphi_\xi(t)=\Ex_\xi\e^{-tW}$. We first prove that there exists
$s_0<1$, independent of $\xi$, such that
\begin{equation}\label{s0uniform}
\varphi_\xi(1)\le s_0.
\end{equation}
Fix $v\in(0,1)$. By \eqref{uniformpositive} and H\"older's
inequality,
$$
1=\Ex_\xi W
\le v+\Ex_\xi\big[W\1_{\{W>v\}}\big]
\le v+K^{1/p}\Pr_\xi(W>v)^{1-1/p},
$$
so, uniformly in $\xi$,
$$
\Pr_\xi(W>v)
\ge\Big(\frac{(1-v)^p}{K}\Big)^{1/(p-1)}=:d_v.
$$
Splitting at the event $\{W>v\}$,
\begin{align*}
\varphi_\xi(1)
=\Ex_\xi\e^{-W}
&\le\Pr_\xi(W\le v)+\e^{-v}\Pr_\xi(W>v)\\
&=1-(1-\e^{-v})\Pr_\xi(W>v)
\le1-(1-\e^{-v})d_v=:s_0<1,
\end{align*}
which proves \eqref{s0uniform}.

By
$\varphi_\xi(t)=f_0\big(\varphi_{T\xi}(t/m_0)\big)$ and
$f_i\le g$ (see \eqref{fdomg}), together with the monotonicity of $g$
and of $\varphi_{T^n\xi}$, we get, whenever $t/\Pi_n\ge1$,
\begin{equation}\label{Laplaceiteration}
\varphi_\xi(t)
\le g_n\big(\varphi_{T^n\xi}\big(t/\Pi_n\big)\big)
\le g_n\big(\varphi_{T^n\xi}(1)\big)
\le g_n(s_0).
\end{equation}
Applying Lemma \ref{l2.3n} with the fixed $z=1-s_0\in(0,1)$,
\begin{equation}\label{giterates0}
g_n(s_0)
=g_n\big(1-(1-s_0)\big)
\le C_0(1-s_0)^{-d_0}\rho^n
=:C\rho^n.
\end{equation}
For $t\ge1$, take $n=\lfloor\log t/\log\bar m\rfloor$. Then
$\Pi_n\le\bar m^n\le t$, so $t/\Pi_n\ge1$ and
\eqref{Laplaceiteration}--\eqref{giterates0} apply. Moreover, for $n>\log t/\log\bar m-1$,
$$
\rho^n
\le\rho^{-1}\exp\Big(-\frac{\log(1/\rho)}{\log\bar m}\log t\Big)
=\rho^{-1}t^{-\log(1/\rho)/\log\bar m}.
$$
Combining it with \eqref{Laplaceiteration}--\eqref{giterates0}, we obtain
\begin{equation}\label{Laplacedecay}
\varphi_\xi(t)
\le Ct^{-\log(1/\rho)/\log\bar m},
\qquad t\ge1,
\end{equation}
with  $C:=C_0(1-s_0)^{-d_0}\rho^{-1}$ independent of $\xi$.

Now choose $b>0$ small enough that
\begin{equation}\label{bchoice}
2b<\frac{\log(1/\rho)}{\log\bar m}
\qquad\text{and}\qquad
\theta:=\bar p\bar m^b<1.
\end{equation}
Hence
$\bar p\bar m^b\downarrow\bar p<1$ as $b\downarrow0$. For $x\le1$,
Markov's inequality applied to $\e^{-W/x}$ gives
$$
F_\xi(x)
=\Pr_\xi(W\le x)
=\Pr_\xi\big(\e^{-W/x}\ge\e^{-1}\big)
\le\e\,\varphi_\xi(1/x).
$$
Since $1/x\ge1$, by \eqref{Laplacedecay} and   \eqref{bchoice}, we conclude  \eqref{quenchedbounds}.

(iv)
  From $W\overset{d}{=}m_0^{-1}\sum_{j=1}^{Z_1}W^{(j)}$, we have
$$
F_\xi(x)=\sum_{j\geq1}p_j(\xi_0)F_{T\xi}^{*j}(m_0x).
$$
By (ii) and the convolution bound proved there, for every $0<u<v<\infty$ and $j\geq1$,
$$
\sup_{x\in[u,v]}m_0p_j(\xi_0)w_{T\xi}^{*j}(m_0x)
\leq\frac{jp_j(\xi_0)A(T\xi)}u.
$$
Since $\sum_{j\geq1}jp_j(\xi_0)=m_0<\infty$, the series of derivatives converges locally uniformly on $(0,\infty)$. The series of distribution functions also converges uniformly, since its $j$-th term is bounded by $p_j(\xi_0)$. We may therefore differentiate term by term and obtain
\begin{equation}\label{qdensityeq}
w_\xi(x)
=m_0\sum_{j\geq1}p_j(\xi_0)\,w_{T\xi}^{*j}(m_0x),
\qquad x>0.
\end{equation}
We shall twice apply the elementary convolution estimate of
Lemma \ref{l2.4}.  Applying \eqref{convolutionestimate} with $d=0$, $\alpha=2b$, and $A_0=A(\xi)$, we have
$$
w_\xi^{*j}(x)\leq CA(\xi)x^{2b-1},\qquad j\geq2,\quad x\leq1.
$$
The quenched density equation \eqref{qdensityeq} therefore gives, for $x\leq x_*:=1/\bar m$,
\begin{equation}\label{bound0}
w_\xi(x)\leq m_0p_1w_{T\xi}(m_0x)+CA(T\xi)x^{2b-1}.
\end{equation}
Multiply the preceding bound by $x^{1-b}$, and set
$$
u_k(y):=y^{1-b}w_{T^k\xi}(y),
\qquad
\theta_k:=p_1(\xi_k)m_k^b.
$$
Applying \eqref{bound0}  at the environment $T^k\xi$ with argument $y$ yields
\begin{equation}\label{urecursion}
u_k(y)\le\theta_k\,u_{k+1}(m_ky)+CA(T^{k+1}\xi)\,y^b,
\qquad y\le x_*.
\end{equation}
By \eqref{bchoice},
$\theta_k\le\bar p\bar m^b=\theta<1$.
 Fix $0<x<x_*$. Let $\tau:=\inf\{n\ge0:\Pi_nx\ge x_*\}$. Note that $\tau<\infty$ since
$\Pi_n\ge\underline m^n\to\infty$. As $\Pi_kx<x_*$ for every $k<\tau$,
we may iterate \eqref{urecursion} along $y=\Pi_kx$ for
$k=0,1,\ldots,\tau-1$, which gives
\begin{equation}\label{uexpansion}
u_0(x)
\le C\sum_{k=0}^{\tau-1}\Big(\prod_{i<k}\theta_i\Big)
A(T^{k+1}\xi)\,(\Pi_kx)^b
+\Big(\prod_{i<\tau}\theta_i\Big)u_\tau(\Pi_\tau x),
\end{equation}
where $\prod_{i<0}:=1$. On the right-hand side, $\prod_{i<k}\theta_i\le\theta^k$ and
$(\Pi_kx)^b\le x_*^b\le1$ for $k<\tau$. Moreover, by the definition of $\tau$, $\Pi_{\tau-1}x<x_*$, hence
$$
x_*\le\Pi_\tau x=m_{\tau-1}\Pi_{\tau-1}x<\bar m x_*=1,
$$
and, using $yw_{T^\tau\xi}(y)\le A(T^\tau\xi)$ from
step (ii), we have
\begin{equation}\label{uterminal}
u_\tau(\Pi_\tau x)
=(\Pi_\tau x)^{-b}\,(\Pi_\tau x)\,w_{T^\tau\xi}(\Pi_\tau x)
\le x_*^{-b}A(T^\tau\xi).
\end{equation}
Substituting \eqref{uterminal} into \eqref{uexpansion},
\begin{equation}\label{x1bbound}
x^{1-b}w_\xi(x)
\le C\sum_{k<\tau}\theta^kA(T^{k+1}\xi)
+Cx_*^{-b}\theta^\tau A(T^\tau\xi)
\le C'\sum_{k\ge0}\theta^kA(T^k\xi),
\end{equation}
where $C':= \max\{C,1\}(\theta^{-1}+x_*^{-b}) $ is deterministic. For $x\in[x_*,1]$, the
same final bound (with a possibly larger $C'$) follows directly from
$xw_\xi(x)\le A(\xi)$, since $x^{1-b}w_\xi(x)\le x_*^{-b}A(\xi)$ and
$A(\xi)$ is the $k=0$ term of the sum. Now put
\begin{equation}\label{Bxidef}
B(\xi):=  C' \sum_{k\ge0}\theta^kA(T^k\xi),
\end{equation}
so that $w_\xi(x)\le B(\xi)x^{b-1}$ for $0<x\le1$. By step (i),
$\E A(T^k\xi)^r=\E A(\xi)^r<\infty$ for every $r\ge1$, so Minkowski's
inequality yields
$$
\big\|B(\xi)\big\|_r
\le C' \sum_{k\ge0}\theta^k\big\|A(T^k\xi)\big\|_r
=\frac{  C' \big\|A(\xi)\big\|_r}{1-\theta}<\infty,
$$
which completes the proof of \eqref{quenchedbounds0} and
\eqref{regularitymoments0}. A second application of
\eqref{convolutionestimate}, now with $d=b$, $\alpha=2b$, $K_0=C$ and
$A_0=B(\xi)$, yields
  \eqref{conditionalconvolution}.
\end{proof}

\begin{lemma}\label{densityproperties}
The annealed distribution $\mu_W(\cdot)=\P(W\in \cdot)$ satisfies
\begin{enumerate}[label=(\arabic*)]
\item $\mu_W$ is absolutely continuous w.r.t. Lebesgue measure;
\item the density $w$ can be chosen to be continuous on $(0,\infty)$;
\item for every compact set $K \Subset (0,\infty)$, $\sup_{x\in K}w(x)<\infty$;
\item $\sup_{x>0}xw(x)\leq \dfrac{1}{2\pi}\int_\R |\Phi'(t)|\dif t<\infty$.
\end{enumerate}
\end{lemma}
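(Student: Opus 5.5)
The plan is to average the quenched statements of Lemma \ref{quenchedregularity} over the environment. For (1), I will use that $\mu_W(B)=\E[\Pr_\xi(W\in B)]$ and, by Lemma \ref{quenchedregularity}(ii), $\Pr_\xi(W\in B)=\int_B w_\xi(x)\dif x$ for almost every $\xi$. If $B$ is Lebesgue-null, then $\Pr_\xi(W\in B)=0$ a.s., hence $\mu_W(B)=0$. Tonelli then gives the density $w(x):=\E[w_\xi(x)]$, provided $(x,\xi)\mapsto w_\xi(x)$ is jointly measurable. I will secure joint measurability by realizing $w_\xi$ through the Fourier inversion formula from part (ii):
$$xw_\xi(x)=q_\xi(x)=\frac1{2\pi}\int_\R \e^{-\im tx}(-\im\Phi'_\xi(t))\dif t,$$
whose integrand is jointly measurable in $(t,x,\xi)$.

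For (2)--(4), I will work with the weighted measure $x\mu_W(\dif x)$. Its total mass is $\E W=1$ and its Fourier transform is $-\im\Phi'(t)=-\im\E[\Phi'_\xi(t)]$. The derivative may be taken inside the expectation because $|\Phi'_\xi|\le1$ by \eqref{Phiprimetrivial}. Hence
$$|\Phi'(t)|\le\E|\Phi'_\xi(t)|,\qquad \int_\R|\Phi'(t)|\dif t\le 2\pi\,\E A(\xi)<\infty$$
by Lemma \ref{quenchedregularity}(i) with $r=1$. Fourier inversion then shows that $x\mu_W(\dif x)$ has a bounded, uniformly continuous density $q$ with $\|q\|_\infty\le\frac1{2\pi}\int|\Phi'|$. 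Setting $w(x):=q(x)/x$ on $(0,\infty)$ gives a version of the density of $\mu_W$ that is continuous on $(0,\infty)$; this is (2). The bound $\sup_{x>0}xw(x)\le\frac1{2\pi}\int_\R|\Phi'(t)|\dif t$ is then immediate, which is (4). For (3), on a compact $K\subset[u,v]$ with $u>0$ we get $\sup_K w\le u^{-1}\sup_x xw(x)<\infty$.

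Finally, I will note that this continuous version coincides a.e.\ with $\E[w_\xi(x)]$, since both are densities of $\mu_W$. The equality holds pointwise as well: $\E[q_\xi(x)]=q(x)$ follows from Fubini, using $\int\E|\Phi'_\xi|<\infty$, and this will be convenient later.

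The main (mild) obstacle is justifying the interchanges, namely differentiating $\Phi=\E\Phi_\xi$ under $\E$ and the Fubini step in the inversion formula. Both rest on the integrability $\E\int|\Phi'_\xi|\dif t=2\pi\E A(\xi)<\infty$ and the uniform bound $|\Phi'_\xi|\le1$, so they should be routine given Lemma \ref{quenchedregularity}.
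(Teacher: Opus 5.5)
Your proposal is correct and follows the paper's route. Part (1) comes from averaging the quenched null-set statement over $\xi$, and parts (2)--(4) from Fourier inversion of the weighted measure $x\mu_W(\dif x)$, using $\int_\R|\Phi'(t)|\dif t\le 2\pi\E A(\xi)<\infty$ and then dividing by $x$. Your extra observation that $\E[w_\xi(x)]=w(x)$ holds pointwise for the continuous versions (via Fubini in the inversion formula) is worth keeping, since the paper uses it implicitly when it sets $w_1=w$ in Step 2 of the main proof.
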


\begin{proof}
The key analytic input is the integrability of $\Phi'$. Claims (2)--(4) then follow by Fourier inversion.

(1)  By \cref{quenchedregularity}, for almost every environment $\xi$ the quenched law $\mathcal{L}_\xi(W)$ is absolutely continuous on $(0,\infty)$. Hence, for every Lebesgue-null set $A$,
$$
\P(W\in A)=\E[\Pr_\xi(W\in A)]=0,
$$
so the annealed law $\mu_W$ is absolutely continuous, i.e., (1) holds.
\smallskip

In the following we prove (2)--(4).
Since $\E W=1<\infty$, $\Phi$ is continuously differentiable with $\Phi'(t)=\im\E[W\e^{\im tW}]$. The annealed identities are
\begin{equation}\label{Phi-t}
\begin{aligned}
\Phi(t)&=\E\left[f_0\big(\Phi_{T\xi}(t/m_0)\big)\right],\\
\Phi'(t)&=\E\left[\frac1{m_0}f'_0\big(\Phi_{T\xi}(t/m_0)\big)\Phi'_{T\xi}(t/m_0)\right].
\end{aligned}
\end{equation}
By \cref{quenchedregularity} and Tonelli's theorem,
\begin{equation}\label{PhiL1}
\int_\R|\Phi'(t)|\dif t
\leq\E\int_\R|\Phi'_\xi(t)|\dif t
=2\pi\E A(\xi)<\infty.
\end{equation}
Now define
$$
\Psi(x)=\dfrac{1}{2\pi}\int_\R \e^{-\im tx}\Phi'(t)\dif t .
$$
By \eqref{PhiL1}, $\Psi$ is bounded and continuous with $|\Psi(x)|\leq\dfrac{1}{2\pi}\int_\R|\Phi'(t)|\dif t$. Since $\Phi'$ is the Fourier transform of the finite complex measure $\im x\mu_W(\dif x)$, the Fourier inversion theorem for finite measures gives $\im x\mu_W(\dif x)=\Psi(x)\dif x$. As $\mu_W$ is supported on $(0,\infty)$, it follows that $\mu_W$ admits the density
\begin{equation}\label{wPsi}
w(x)=\dfrac{\Psi(x)}{\im x}=\dfrac{-\im \Psi(x)}{x},
\qquad x>0,
\end{equation}
which is continuous on $(0,\infty)$ and satisfies
$$
\sup_{x>0}xw(x)=\sup_{x>0}|\Psi(x)|\leq\dfrac{1}{2\pi}\int_\R|\Phi'(t)|\dif t<\infty .
$$
This proves (2) and (4).

Finally, if $K\Subset (0,\infty)$ and $\delta_K:=\min_{x\in K}x>0$, then by \eqref{wPsi},
$$
\sup_{x\in K}w(x)\leq \dfrac{1}{2\pi \delta_K}\int_\R|\Phi'(t)|\dif t<\infty,
$$
which proves (3).
\end{proof}

\section{The proof of Theorem \ref{mainthm}}\label{sec:main}

\begin{proof}
The first assertion follows from Lemma \ref{densityproperties}. In the following we prove \eqref{main00}.

 \textbf{Step 1. Asymptotics of $F$.}
By the decomposition \eqref{decomposition} and independence of the first environment from the future environment, for $x>0$,
\begin{equation}\label{Fdecom}
F(x)=\E[p_1F(m_0x)]+R_F(x),\qquad
R_F(x):=\E\left[\sum_{j\geq2}p_jF_j(m_0x)\right].
\end{equation}
Define
\begin{equation}\label{Gt}
\G(t):=\e^{a_0 t}F(\e^{-t}),\qquad t\in\R.
\end{equation}
Since
$$
\E[p_1 F(m_0\e^{-t})]=\e^{-a_0 t}\E[p_1 m_0^{a_0}\G(t-\log m_0)]=\e^{-a_0 t}\tilde{\E}[\G(t-\log m_0)],
$$
multiplying \eqref{Fdecom} by $\e^{a_0t}$ yields the renewal equation on the whole line:
\begin{equation}\label{G}
\G(t)=\tilde{\E}[\G(t-\log m_0)]+H(t),
\qquad
H(t):=\e^{a_0t}R_F(\e^{-t}).
\end{equation}

We now solve \eqref{G}. First we bound   $H$. By \cite[Corollary 2.3]{grama2023}, for every $0<b<a_0$,
\begin{equation}\label{Fxbound}
C(b):=\E W^{-b}<\infty,\qquad \mbox{and}\qquad  F(x)\leq C(b)x^b.
\end{equation}
For the nonlinear term we instead use two initial individuals in the same environment. Since
$$
\E[p_1^2m_0^{a_0}]\leq\bar p\E[p_1m_0^{a_0}]=\bar p<1
$$
and $m_0$ is bounded, there is $a_2'>a_0$ with $\E[p_1^2m_0^{a_2'}]<1$. By \cite[Theorem 2.1(b)]{grama2023}, $C_2:=\E_2W^{-a_2'}<\infty$. Then $F_j(y)\leq F_2(y)\leq C_2y^{a_2'}$ for all $j\geq2$. Hence
\begin{equation}\label{RF}
R_F(x)\leq C_2x^{a_2'}\E[(1-p_1)m_0^{a_2'}]\leq C'x^{a_2'},
\end{equation}
where $C'=C_2\bar{m}^{a_2'}$. As $R_F(x)\leq1$ for all $x>0$, with $c:=\min\{a_2'-a_0,a_0\}>0$ we obtain
\begin{equation}\label{Ht}
H(t)\leq C\e^{-c|t|},\qquad t\in\R.
\end{equation}

Recall  that $\tilde{U}(B):=\sum_{k\geq 0}\tilde{\P}(S_k\in B)$ is the renewal measure of $(S_k)$ under $\tilde{\P}$.
Iterating \eqref{G} $n$ times gives
$$
\G(t)=\tilde{\E}[\G(t-S_n)]+\sum_{k=0}^{n-1}\tilde{\E}[H(t-S_k)].
$$
By \eqref{Fxbound}, $\G(t)\leq C(b)\e^{(a_0-b)t}$, and $\E[p_1m_0^b]<\E[p_1m_0^{a_0}]=1$ for $b<a_0$ since $m_0>1$ a.s., so
$$
\tilde{\E}[\G(t-S_n)]\leq C(b)\,\e^{(a_0-b)t}\left(\E[p_1m_0^b]\right)^n\longrightarrow 0,
\qquad n\to\infty.
$$
The series below converges, since by \eqref{Ht} and \eqref{renewal0},
$$
\int_0^\infty|H(t-s)|\,\tilde U(\dif s)
\leq C\left(1+\dfrac{1}{\log\underline m}\right)\sum_{j\geq0}\;\sup_{s\in[j,j+1]}\e^{-c|t-s|}<\infty .
$$
Therefore
\begin{equation}\label{Grenewalsum}
\G(t)=\sum_{k=0}^\infty\tilde{\E}[H(t-S_k)]
=\int_0^\infty H(t-s)\,\tilde{U}(\dif s),
\qquad t\in\R.
\end{equation}
Moreover, $H$ is continuous   by \eqref{Fdecom},
the continuity of $F$ and dominated convergence, so \eqref{Ht}
shows that $H$ is directly Riemann integrable (DRI). Applying Lemma \ref{lemma-renewal} by  combining \eqref{Grenewalsum}, \eqref{renewal00} and \eqref{tail}, we arrive at
$$
\lim_{t\to\infty}\G(t)=\frac{1}{\tilde{\mu}}\int_\R H(s)\,\dif s=:D,
$$
which, in view of \eqref{Gt}, means
\begin{equation}\label{Fx0}
  x^{-a_0}F(x) \longrightarrow  D,
\qquad x\downarrow 0.
\end{equation}
Finally we show that the constant $D$ is positive and finite. By \eqref{RF} and $R_F\leq1$,
$$
\int_\R H(s)\dif s=\int_0^\infty x^{-a_0-1}R_F(x)\dif x
\leq C'\int_0^1x^{a_2'-a_0-1}\dif x+\int_1^\infty x^{-a_0-1}\dif x<\infty.
$$
On the other hand, $\int_0^\infty y^{-a_0-1}F_j(y)\dif y=a_0^{-1}\E_jW^{-a_0}$. Tonelli's theorem and the substitution $y=m_0x$ therefore give
$$
\int_\R H(s)\dif s
=\frac1{a_0}\sum_{j\geq2}\E[p_jm_0^{a_0}]\,\E_jW^{-a_0}.
$$
Consequently,
\begin{equation}\label{Dformula}
D=\frac1{a_0\tilde\mu}\sum_{j\geq2}\E[p_jm_0^{a_0}]\,\E_jW^{-a_0}.
\end{equation}
Every summand in \eqref{Dformula} is nonnegative, and at least one is strictly positive: $\E[p_jm_0^{a_0}]>0$ for some $j\geq2$ since $\bar p<1$, and the martingale limit under $\P_j$ is finite and strictly positive a.s. Hence $D>0$.  Thus \eqref{Fx0} gives $F(x)\sim Dx^{a_0}$ as $x\downarrow0$, completing the proof of Step 1.

\medskip
\textbf{Step 2. The renewal equation for the density $w$.}\quad
For $j\geq1$, define $w_j(x):=\E[w_\xi^{*j}(x)]$. In particular, $w_1=w$.  By symmetry and \cref{quenchedregularity},  for $j\geq2$, $$xw_\xi^{*j}(x)=j\int yw_\xi(y)w_\xi^{*(j-1)}(x-y)\dif y\leq jA(\xi).$$ Hence,  $xw_j(x)\leq j\E A(\xi)$.
The continuous versions of $w_\xi^{*j}$ constructed
in \cref{quenchedregularity} satisfy, on any $[u,v]\subset(0,\infty)$,
$$
\sup_{x\in[u,v]}w_\xi^{*j}(x)\leq\frac{jA(\xi)}u,
\qquad \E A(\xi)<\infty,
$$
and by dominated convergence, each $w_j$ is continuous on $(0,\infty)$, and
$$
\sup_{x\in[u,v]}m_0\sum_{j\geq2}p_jw_j(m_0x)
\leq\frac{\E A(\xi)}u\sum_{j\geq2}jp_j
\leq\frac{\bar m\E A(\xi)}u.
$$
Applying dominated convergence again and noting that $\sum_j j p_j=m_0\le \bar{m}<\infty$ we have that
 the function
$$
R(x):=\E\left[m_0\sum_{j\geq2}p_jw_j(m_0x)\right]
$$
is continuous on $(0,\infty)$, and
\begin{equation}\label{globalRbound}
R(x)\leq\frac{\E A(\xi)}x\E\left[\sum_{j\geq2}jp_j\right]\leq\frac{\bar m\E A(\xi)}x.
\end{equation}
  By Tonelli's theorem,
\begin{align*}
R_F(x)
&=\E\left[\sum_{j\geq2}p_j\int_0^{m_0x}w_j(y)\dif y\right]
=\int_0^xR(u)\dif u,\\
\E[p_1F(m_0x)]
&=\int_0^x\E[m_0p_1w(m_0u)]\dif u.
\end{align*}
The last integrand is continuous in $u$, since
$$
m_0p_1w(m_0u)\le p_1 \E  A(\xi)/u\le \E A(\xi)/u
$$
by \cref{quenchedregularity}(ii)  after averaging, so it is locally bounded and dominated convergence applies.
Differentiating \eqref{Fdecom} gives, for every $x>0$,
\begin{equation}\label{wdecom}
w(x)=\E[m_0p_1w(m_0x)]+R(x).
\end{equation}
Let $b$ and $B(\xi)$ be as in \cref{quenchedregularity}. Choose $r>1$ sufficiently large that
$$
a_*:=b+a_0(1-1/r)>a_0.
$$
The asymptotic \eqref{Fx0} and the bound $F\leq1$ imply $F(x)\leq Cx^{a_0}$ for $0<x\leq1$. By \eqref{conditionalconvolution}, H\"older's inequality, and $0\leq F_\xi\leq1$,
\begin{equation}\label{wjz}
\begin{aligned}
w_j(x)&\leq Cx^{b-1}\E[B(\xi)F_\xi(x)]\\
&\leq Cx^{b-1}\big(\E B(\xi)^r\big)^{1/r}
\big(\E F_\xi(x)^{r/(r-1)}\big)^{1-1/r}\\
&\leq Cx^{b-1}F(x)^{1-1/r}\leq Cx^{a_*-1},
\qquad j\geq2,\quad 0<x\leq1.
\end{aligned}
\end{equation}
The constant is uniform in $j$. Thus
\begin{equation}\label{Rxxx0}
R(x)\leq Cx^{a_*-1}\E[(1-p_1)m_0^{a_*}]\leq C'x^{a_*-1},
\qquad 0<x\leq1/\bar m.
\end{equation}
Define
$$
h(x):=x^{1-a_0}w(x),\qquad
\mathcal H(t):= h(\e^{-t})=\e^{(a_0-1)t} w(\e^{-t}),\qquad
R_H(t):=\e^{(a_0-1)t}R(\e^{-t}).
$$
Equation \eqref{wdecom} becomes
\begin{equation}\label{Hdecom}
\mathcal H(t)=\tilde\E[\mathcal H(t-\log m_0)]+R_H(t).
\end{equation}
 Since $R$ is continuous, so is $R_H$.  By \eqref{globalRbound} and \eqref{Rxxx0}, $R_H$ satisfies
\begin{equation}\label{RHbound}
R_H(t)=O(\e^{-(a_*-a_0)t})\quad(t\to+\infty),\qquad
R_H(t)=O(\e^{a_0t})\quad(t\to-\infty).
\end{equation}
In particular, $R_H$ is directly Riemann integrable on $\R$.

\medskip
\textbf{Step 3. Asymptotics of $w$.}\quad
By \cref{densityproperties}, $w(x)\leq C/x$, so $\mathcal H(t)\leq C\e^{a_0t}$. Consequently
$$
0\leq\tilde\E[\mathcal H(t-S_n)]
\leq C\e^{a_0t}\big(\tilde\E\e^{-a_0\log m_0}\big)^n
=C\e^{a_0t}(\E p_1)^n\longrightarrow0.
$$
  The series $\sum_{k\geq0}\tilde\E[R_H(t-S_k)]$ converges by \eqref{RHbound} and \eqref{renewal0} (as in Step 1); iterating \eqref{Hdecom} and letting $n\to\infty$ therefore gives
\begin{equation}\label{densityrenewalsum}
\mathcal H(t)=\int_{[0,\infty)}R_H(t-s)\,\tilde U(\dif s).
\end{equation}
The key renewal theorem applies exactly as in Step 1, using \eqref{RHbound} and \eqref{renewal0} for the part on the negative half-line. Hence
$$
\mathcal H(t)\longrightarrow\frac1{\tilde\mu}\int_\R R_H(s)\dif s=:G<\infty.
$$
Finally, Tonelli's theorem and $R_F(x)=\int_0^xR(y)\dif y$ give
$$
\int_\R H(t)\dif t
=\int_0^\infty x^{-a_0-1}\int_0^xR(y)\dif y\dif x
=\frac1{a_0}\int_0^\infty y^{-a_0}R(y)\dif y
=\frac1{a_0}\int_\R R_H(t)\dif t.
$$
Therefore $G=a_0D>0$, and
$$
w(x)\sim a_0D x^{a_0-1},\qquad x\downarrow0.
$$
The proof is complete.

\end{proof}

\begin{remark}\label{rem:LF} We present an example for Theorem \ref{mainthm}.
Consider the linear fractional offspring distribution:
$$
p_0=0,\quad p_k(\xi_i)=q_i(1-q_i)^{k-1}, \quad k\geq 1,~~i\geq 0,
$$
where $q$ is a random variable taking values in $[\underline q,\bar q]$ for some constants $0<\underline q\leq\bar q<1$ (so that $m_0=1/q$ satisfies \cref{boundedm}), and $q_i$ are i.i.d. non-degenerate copies of $q$, with $-\log q$ non-lattice.
Writing $f_{0,n}:=f_0\circ\cdots\circ f_{n-1}$, we have
$$
f_{0,n}(s)=\dfrac{\Pi_n^{-1}s}{1-(1-\Pi_n^{-1})s},
$$
thus, conditionally on $\xi$, $Z_n$ has the geometric distribution with success probability $\Pi_n^{-1}$. Hence $\Ex_\xi[\e^{-uW}]=\lim_n f_{0,n}(\e^{-u/\Pi_n})=(1+u)^{-1}$, so the quenched limit law is $\mathrm{Exp}(1)$ for almost every environment. Therefore $W\sim\mathrm{Exp}(1)$ and $w(x)=\e^{-x}$.

For this model, $a_0=1$. Since $p_1m_0\equiv1$, we have $\tilde\P=\P$ and $\tilde\mu=\mu=\E\log m_0$. The sum of $j$ quenched copies has the $\mathrm{Gamma}(j,1)$ law for every environment. Thus
$$
\E[p_jm_0]=\E[(1-q)^{j-1}],\qquad
\E_jW^{-1}=\frac{\Gamma(j-1)}{\Gamma(j)}=\frac1{j-1}.
$$
Thus
$$
D=\dfrac{1}{\mu}\sum_{j\geq 2}\dfrac{\E[(1-q)^{j-1}]}{j-1}=\dfrac{1}{\mu}\E\left[\sum_{k\geq 1}\dfrac{(1-q)^k}{k}\right]=\dfrac{\E[-\log(q)]}{\mu}=\dfrac{\E\log m_0}{\mu}=1,
$$
which gives $G=1$. This agrees with the known $w(x)=\e^{-x}\sim 1=Gx^{a_0-1}$, $x\to 0$.
\end{remark}

{\bf Acknowledgement.}  The second author is supported by the National Natural Science Foundation of China  (Grant No. 12671167).

\end{document}